%%%%%%%%%%%%%%%%%%%%%%%%%%%%%%%%%%%%%%%%%%%%%%%%%%%%%%%%%%%%%%%%%%%%%%%%%%%%%%%%
%2345678901234567890123456789012345678901234567890123456789012345678901234567890
%        1         2         3         4         5         6         7         8

\documentclass[letterpaper, 10 pt, conference]{ieeeconf}  % Comment this line out
                                                          % if you need a4paper
%\documentclass[a4paper, 10pt, conference]{ieeeconf}      % Use this line for a4
                                                          % paper

\IEEEoverridecommandlockouts                              % This command is only
                                                          % needed if you want to
                                                          % use the \thanks command
\overrideIEEEmargins
% See the \addtolength command later in the file to balance the column lengths
% on the last page of the document

\usepackage{amsmath}
%\usepackage{layouts}
% The following packages can be found on http:\\www.ctan.org
\usepackage{graphics} % for pdf, bitmapped graphics files
\usepackage{epsfig} % for postscript graphics files
\usepackage{mathptmx} % assumes new font selection scheme installed
\usepackage{times} % assumes new font selection scheme installed
 % assumes amsmath package installed
\usepackage{amssymb}  % assumes amsmath package installed
\usepackage{bm}
\usepackage{etoolbox}
\usepackage{hyperref}
\usepackage{lipsum}
\usepackage{booktabs}
\usepackage{color}
\hypersetup{
	colorlinks=true,
	linkcolor=black,
	urlcolor=blue,
	pdftitle={},
	pdfpagemode=FullScreen,
}
\usepackage{algorithmic}
\usepackage{stfloats}

\usepackage{afterpage}
\usepackage{amsmath}
\newcommand{\pd}[2]{\frac{\partial #1}{\partial #2}}

\usepackage{diagbox}
\usepackage{multirow} 
\usepackage{amsfonts}
\usepackage{graphicx}
\usepackage{xparse}
\usepackage{mathtools}
\usepackage{stmaryrd}
\usepackage[ruled,vlined]{algorithm2e}
\usepackage{caption}

\usepackage{subcaption}
\usepackage{tikz}
\usepackage{graphicx}
\usepackage{svg}
\usepackage{physics}

\raggedbottom

\usepackage{float}

\usepackage{etoolbox}

\newtheorem{theorem}{Theorem}

\newtheorem{assumption}[theorem]{Assumption}
\newtheorem{corollary}[theorem]{Corollary}
\newtheorem{proposition}[theorem]{Proposition}

\newtheorem{lemma}[theorem]{Lemma}

\title{\textbf{ An optimal-control framework for reaction diffusion systems with application to   synthetic developmental biology. }}

\author{\small
    Mohamed Amine Ouchdiri$^{1}$, Hamza Faquir$^{2}$, Saad Benjelloun$^{3}$,  Mohamed Maghenem$^{4}$, Irene Otero-Muras$^{2}$ and Adnane Saoud$^{1}$\\
    \thanks{$^{1}$Mohamed Amine Ouchdiri and Adnane Saoud are with College of Computing, University Mohammed VI Polytechnic, Benguerir, Morocco}
    \thanks{$^{2}$ Hamza Faquir and Irene Otero-Muras are with Institute for Integrative Systems Biology
(CSIC-UV), Spanish National Research Council, 46980 Valencia, Spain.
 }
    \thanks{$^{3}$Saad Benjelloun is with De Vinci Higher Education, De Vinci Research Center, Paris, France.}
    \thanks{$^{4}$ Mohamed Adlene Maghenem is with Université Grenoble Alpes, CNRS, Grenoble-INP, GIPSA-lab, Grenoble, France.}
}

\begin{document}

\maketitle
\thispagestyle{empty}
\pagestyle{empty}

\begin{abstract}
 Reaction-diffusion systems offer a powerful framework for understanding self-organized patterns in biological systems, yet controlling these patterns remains a significant challenge. As a consequence, we present a rigorous framework of optimal control for a class of  coupled reaction-diffusion systems. The  couplings are justified by the shared regulatory mechanisms encountered in synthetic biology. Furthermore, we introduce inputs and polynomial input-gain functions to guarantee well-posedness of the control system while maintaining  biological relevance. As a result, we formulate an optimal control problem and derive necessary optimality conditions. 
We demonstrate our framework on an instance of such equations  modeling the Nodal-Lefty interactions in mammalian cells.  Numerical simulations showcase  the effectiveness in directing pattern towards diverse targeted ones.  
\end{abstract}

\section{Introduction}
\label{sec1}
The study of optimal control for semilinear parabolic partial differential equations (PDE)s is a cornerstone in optimal control theory, providing essential tools for guiding and manipulating dynamical systems that evolve in both space and time. Our work introduces a novel mathematical framework of optimal control for a class of coupled reaction-diffusion systems. In fact, this work belongs to the broader contexts of optimal control in semilinear parabolic PDEs. The theoretical foundations for the control of parabolic PDEs date back to the seminal contribution of Jacques-Louis Lions~\cite{Lions1971}, who formulated fundamental principles regarding the existence of solutions, uniqueness properties and regularity conditions. This mathematical foundation was subsequently expanded in ~\cite{barbu1993}, incorporating state constraints and variational inequalities, substantially broadening the applicability of the theory. Further advancements are due to Fredi Tröltzsch~\cite{Troeltzsch2010}, who provided systematic approaches to derive necessary optimality conditions as well as efficient numerical algorithms to solve complex optimal control problems. The impact of these developments extends far beyond the theoretical interest, finding applications in diverse fields, including control of diffusion processes~\cite{Lions1971}, optimization
of chemical reaction kinetics in industrial
reactors~\cite{christofides2001}, and management of population dynamics
in ecological systems~\cite{clark1990}.

The considered class of 
reaction-diffusion systems is known to capture the interaction among multiple species as well as their emerging patterns in the context of synthetic developmental biology. 
For example, in \cite{Casas2018,Ryll2016},  nonlinear reaction--diffusion systems are shown to exhibit dynamical patterns such as  traveling waves, spiral waves, and moving spot patterns. 
Beyond these dynamical patterns, Turing patterns with their characteristic self-organising structures remain a central benchmark both theoretically and experimentally \cite{turing}.
In particular, the works in \cite{sekine,diambra} have shown that, in the context of two-species, spatial Turing patterns, such as stripes and  spots, can be formed. 

Upon generalizing the models in \cite{sekine,diambra}, we  allow for more sophisticated 
patterning capabilities, such as density-dependent patterns encountered in bacterial colonies \cite{karig} and oscillating patterns encountered in spatial gene expressions \cite{tica}. Despite this richness in terms of pattern generation, there is still a real need for control techniques to steer the system from one pattern to anther \cite{santos2019using}, \cite{ebrahimkhani2019synthetic}. Hence, we introduce an optimal-control framework to address this question. 
In particular, we provide mathematical guarantees of well-posedness for the control-to-state mapping, as well as necessary optimality conditions for the formulated optimal-control problem. 
To demonstrate the effectiveness of our theoretical results, we consider a relatively-simple control inputs guiding a complex biological system, known as the Nodal-Lefty reaction-diffusion system. The latter was experimentally reconstructed in mammalian cells by \cite{sekine}. Through the use of  linear input-gain functions, our framework successfully drive the biological system from an  initial pattern to diverse target ones. The implementability of our control strategy is justified by existing optogenetic technologies, enabling practical experimental validation. Examples include light-inducible systems developed in \cite{Kennedy2010} and  \cite{Levskaya2009}, as well as recent advances in photoactivatable Nodal receptors \cite{Sako2016} and spatial light patterning \cite{McNamara2023}.

The paper is organized as follows. Section \ref{sec3} presents the mathematical model. Section \ref{sec4} establishes well-posedness and control-to-state properties. Section \ref{sec5} derives the optimality conditions. Section \ref{sec6} applies the framework to the Nodal-Lefty system with numerical simulations. Section \ref{sec7} concludes. Due to space constraints, some proofs are omitted and will be published elsewhere.

\section{PRELIMINARIES AND MATHEMATICAL MODEL}
\label{sec3}

\subsection{Mathematical Model}
We consider a reaction-diffusion system with $n$ species on a bounded domain $\Omega \subset \mathbb{R}^d$ ($d \in \{1,2\}$) with Lipschitz boundary, over time interval $(0,T)$. Let $Q := \Omega \times (0,T)$ denote the space-time cylinder and $\Sigma := \partial\Omega \times (0,T)$ its lateral surface. The system dynamics are:
\begin{equation}
	\begin{cases}
		\pd{y}{t} = D\Delta y + \alpha(x)H(y) - \Gamma y + f(u) & \text{in } Q, \\
		\pd{y}{\nu} = 0 & \text{on } \Sigma, \\
		y(x,0) = y_0(x) & \text{for } x \in \Omega,
	\end{cases}
	\label{eq:main_system}
\end{equation}
where $y := (y_1, y_2, \ldots, y_n)^\top$ represents species concentrations, $u := (u_1, u_2, \ldots, u_n)^\top$ is the control vector with each component affecting its corresponding state, $D := \text{diag}(d_1, d_2, \ldots, d_n)$ with $d_i>0$ is the diffusion matrix, $\Delta y := (\Delta y_1, \Delta y_2, \ldots, \Delta y_n)^\top$ with $\Delta = \sum_{i=1}^{d} \frac{\partial^2}{\partial x_i^2}$ denotes the Laplacian operator, $\alpha(x) := (\alpha_1(x), \alpha_2(x),\ldots, \alpha_n(x))^\top \in (L^\infty(\Omega))^n$ with $\alpha_i(x)\geq 0$ represents spatially-dependent maximum production rates, $H$ is the regulatory function capturing species interactions, $\Gamma := \text{diag}(\gamma_1, \gamma_2, \ldots, \gamma_n)$ with $\gamma_i > 0$ is the degradation matrix, and $f(u) := (f_1(u_1), f_2(u_2), \ldots, f_n(u_n))^\top$ is the input-gain function. The homogeneous Neumann boundary condition $\pd{y}{\nu} = 0$ ensures zero flux across the boundary, where $\frac{\partial}{\partial \nu}$ denotes the outward normal derivative.

\subsection{Function Spaces}
We employ standard Sobolev spaces for parabolic problems. Let $V := H^1(\Omega)$ denote the Sobolev space with norm $\|v\|_{V} := \left(\|v\|_{L^2(\Omega)}^2 + \|\nabla v\|_{(L^2(\Omega))^d}^2\right)^{1/2}$. The space $W(0,T) := \left\{ v \in L^2(0,T;V) \,\middle|\, \frac{\partial v}{\partial t} \in L^2(0,T;V^*) \right\}$, where $L^2(0,T;V)$ is the space of time-dependent functions in $V$ with finite energy over time, and $V^*$ is the dual space of $V$, allowing weak time derivatives. The space $W(0,T)$ is equipped with the norm $\|v\|_{W(0,T)} := \|v\|_{L^2(0,T;V)} + \left\|\frac{\partial v}{\partial t}\right\|_{L^2(0,T;V^*)}$. The solution space is defined as $Y := W(0,T)^n \cap L^\infty(Q)^n$ with norm $\|y\|_{Y} := \|y\|_{W(0,T)^n} + \|y\|_{L^\infty(Q)^n}$. We assume the initial condition $y_0 \in (L^\infty(\Omega))^n$ for additional regularity. The control space is defined as $U = (L^\infty(Q))^n$.

\subsection{Regulatory-Function Properties}
The regulatory function $H: \mathbb{R}^n_+ \to [0,1]$ is a $C^\infty$ function taking the general form:
\begin{equation}
	H(\mathbf{y}) := \frac{A(\mathbf{y})}{A(\mathbf{y}) + B(\mathbf{y})},
\end{equation}
where:
\begin{itemize}
	\item $A(\mathbf{y}) := \left(\sum_{i \in \mathcal{A}} w_i y_i^{n_i}\right)^m$ represents the activation term.
	\item $B(\mathbf{y}) := K^m \cdot \left(1 + \sum_{j \in \mathcal{I}} \left(\frac{y_j}{K_j}\right)^{n_j}\right)^m$ represents the inhibition term.
\end{itemize}

Here, $\mathcal{A}$ and $\mathcal{I}$ are the sets of activator and inhibitor species indices, respectively; $w_i > 0$ are weighting coefficients; $n_i > 0$ are Hill coefficients; $m > 0$ is an overall cooperativity coefficient; $K > 0$ is the half-maximal activation constant; and $K_j > 0$ are inhibition constants. 

The following proposition establishes a crucial regularity property of the regulatory function that ensures the well-posedness of our control framework.
\begin{proposition}
\label{prop:bounded_derivatives}
Assuming that $n_i \geq 1$ for all $i \in \{1,2,\ldots,n\}$, $m\geq 1$ and the state variables remain positive ($y_i \geq 0$ for all $i$), then the first derivatives of the regulatory function $H$ are uniformly bounded. That is, there exists a constant $C > 0$ such that
\begin{equation}
\left|\frac{\partial H}{\partial y_k}(\mathbf{y})\right| \leq C \qquad \forall k \in \{1,2,\ldots,n\}, \quad \forall \mathbf{y} \in \mathbb{R}^n_+.
\end{equation}
\end{proposition}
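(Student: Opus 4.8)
The plan is to differentiate $H$ explicitly and then control each resulting term by exploiting the structural fact that the inhibition term is bounded away from zero, so no genuine singularity can occur, together with a single elementary scalar inequality. I would first set $S(\mathbf{y}) := \sum_{i \in \mathcal{A}} w_i y_i^{n_i}$ and $R(\mathbf{y}) := 1 + \sum_{j \in \mathcal{I}} (y_j/K_j)^{n_j}$, so that $A = S^m$, $B = K^m R^m$, and note $S \ge 0$, $R \ge 1$, hence $A + B \ge B \ge K^m > 0$. Writing $A_k := \partial A/\partial y_k$ and $B_k := \partial B/\partial y_k$, the quotient rule gives
\begin{equation}
\frac{\partial H}{\partial y_k}(\mathbf{y}) = \frac{A_k B - A B_k}{(A+B)^2}.
\end{equation}
Since $A, B \ge 0$ and the denominator is bounded below, it then suffices to bound the two nonnegative ratios $|A_k| B/(A+B)^2 \le |A_k|/(A+B)$ and $A|B_k|/(A+B)^2 \le |B_k|/B$ separately by constants.

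The one scalar fact I would isolate and reuse is: for every $t \ge 0$ and every $q \ge 1$ one has $t^{q-1} \le 1 + t^q$ (split according to $t \le 1$ and $t > 1$, using $q - 1 \ge 0$); this is exactly where the hypotheses $n_i \ge 1$ and $m \ge 1$ are used. For the activation contribution, if $k \notin \mathcal{A}$ then $A_k = 0$; otherwise the chain rule yields $A_k = m\,S^{m-1}\,w_k n_k\,y_k^{n_k-1}$, and using $A + B \ge S^m + K^m$ gives
\begin{equation}
\frac{|A_k|}{A+B} \le \frac{m n_k w_k\, S^{m-1} y_k^{n_k-1}}{S^m + K^m}.
\end{equation}
Applying the scalar inequality with $(t,q) = (y_k, n_k)$ and the bound $w_k y_k^{n_k} \le S$ gives $w_k y_k^{n_k-1} \le w_k + S$, and with $(t,q) = (S, m)$ gives $S^{m-1} \le 1 + S^m$; plugging these in and using $S^m/(S^m+K^m) \le 1$ and $1/(S^m+K^m) \le K^{-m}$ reduces the right-hand side to a constant depending only on $m$, $n_k$, $w_k$, $K$.

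For the inhibition contribution, if $k \notin \mathcal{I}$ then $B_k = 0$; otherwise $B_k = K^m m\, R^{m-1}\,\tfrac{n_k}{K_k}\,(y_k/K_k)^{n_k-1}$, so that $|B_k|/B = (m n_k/K_k)\,(y_k/K_k)^{n_k-1}/R$. The scalar inequality with $(t,q) = (y_k/K_k, n_k)$ together with $(y_k/K_k)^{n_k} \le R$ gives $(y_k/K_k)^{n_k-1} \le 1 + R$, and since $R \ge 1$ this ratio is at most $2 m n_k/K_k$. Taking $C$ to be the maximum over $k$ of the two constants obtained then proves the claim. The only delicate point — and the reason the assumptions $n_i \ge 1$, $m \ge 1$ cannot be dropped — is the behaviour of the numerators $A_k$, $B_k$ near the boundary $y_k = 0$, where terms of the form $y_k^{n_k-1}$ would blow up if $n_k < 1$, and as $y_k \to \infty$, where the numerators grow polynomially; the inequality $t^{q-1} \le 1 + t^q$ is precisely the device that dominates each numerator by its own denominator uniformly on all of $\mathbb{R}^n_+$.
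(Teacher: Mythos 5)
Your argument is correct. The paper itself omits the proof of this proposition (it is among the proofs deferred for space), so there is no in-paper argument to compare against; your direct route --- quotient rule, the lower bound $A+B \ge B \ge K^m$, and the elementary inequality $t^{q-1} \le 1 + t^q$ for $t \ge 0$, $q \ge 1$ to absorb $y_k^{n_k-1}$, $S^{m-1}$ and $(y_k/K_k)^{n_k-1}$ into their own denominators --- is exactly the kind of estimate the statement calls for, and every step checks out: $|A_k|B/(A+B)^2 \le |A_k|/(A+B)$, $A|B_k|/(A+B)^2 \le |B_k|/B$, the bound $w_k y_k^{n_k-1} \le w_k + S$, and the final reduction to constants depending only on $m$, $n_k$, $w_k$, $K$, $K_k$. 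One cosmetic point: since for a species appearing in both $\mathcal{A}$ and $\mathcal{I}$ both terms of the split can be nonzero, the final constant should be the maximum over $k$ of the \emph{sum} of the activation and inhibition constants rather than their maximum (if the two index sets are disjoint, your choice is already fine); this changes nothing of substance.
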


\subsection{Input Function Properties}
The input function $f: \mathbb{R}^n_+ \to \mathbb{R}^n$ is defined as 
$f(u) := (f_1(u_1), f_2(u_2), \ldots, f_n(u_n))^\top$ where each component 
$f_i: \mathbb{R}^+ \to \mathbb{R}$ satisfies the following assumption.

\begin{assumption}
	\label{assume:input}
    $f_i \in C^{\infty}(\mathbb{R}^+)$ (infinitely differentiable) and 
	\begin{enumerate}
		\item $f_i(u_i) \geq 0$ for all $u_i \in \mathbb{R}^+$;
		\item $f_i$ is  Lipschitz continuous on $\mathbb{R}^+$;
		\item $f_i$ exhibits polynomial growth, i.e., there exist constants $C > 0$ and 
		$m \in \mathbb{N}$ such that
        \begin{equation}
		|f_i(u_i)| \leq C(1 + |u_i|^m) \quad \forall u_i \in \mathbb{R}^+.    
		\end{equation}
		
	\end{enumerate}
\end{assumption}

The polynomial-growth condition ensures solution existence and boundedness \cite{pierre2010global}, while capturing a wide range of biologically-relevant control mechanisms.

\section{Analysis of Well-Posedness and Control-to-State Properties}
\label{sec4}
First, we study the well-posedness to ensure that unique solutions exist for given controls and initial conditions. We then analyze the control-to-state mapping $S$, examining its Lipschitz continuity and differentiability properties essential for optimization algorithms. These intermidiate results are key for the necessary optimality conditions.

\begin{theorem}
	\label{th:wellposedness}
	For any control $u \in (L^\infty(Q))^n$, there exists a unique weak solution $y \in Y$ to the reaction-diffusion system \eqref{eq:main_system}. Moreover, the following 
	estimates hold.
		\begin{align}
			\|y\|_{W(0,T)^n} & \leq C_1(\|y_0\|_{(L^2(\Omega))^n} + \|\alpha\|_{(L^\infty(\Omega))^n} + \|u\|_{(L^\infty(Q))^n}^m),
		\\
			\|y_i\|_{L^\infty(Q)} & \leq \|y_{0,i}\|_{L^\infty(\Omega)} + \frac{\|\alpha_i\|_{L^\infty(\Omega)} + \|f_i(u_i)\|_{L^\infty(Q)}}{\gamma_i},
		\end{align}
	for each component $i \in \{1, 2, \ldots, n\}$, 
	where $C_1$ depends only on $\Omega$, $T$, $D$, and $\Gamma$.
\end{theorem}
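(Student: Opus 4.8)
The plan is to combine a Faedo--Galerkin construction for existence and uniqueness with two a priori estimates: a componentwise $L^\infty$ bound obtained by a truncation/comparison argument, and an energy estimate yielding the $W(0,T)^n$ bound. The key preliminary observations are that, by Proposition~\ref{prop:bounded_derivatives} together with $0 \le H \le 1$, the map $y \mapsto \alpha(x)H(y)$ is bounded and globally Lipschitz from $(L^2(\Omega))^n$ into itself, uniformly in $x$, and that for fixed $u$ the term $f(u)$ is a fixed source with $\|f_i(u_i)\|_{L^\infty(Q)} \le C(1+\|u_i\|_{L^\infty(Q)}^m)$ by Assumption~\ref{assume:input}. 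Since $H$ is a priori defined only on $\mathbb{R}^n_+$, I would first extend it to a bounded, globally Lipschitz function $\tilde H$ on all of $\mathbb{R}^n$, solve the extended system, and recover positivity of the solution afterwards.

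For existence and uniqueness I would run a Galerkin scheme in the eigenbasis of the Neumann Laplacian: each finite-dimensional ODE system is globally solvable because $\tilde H$ is bounded and globally Lipschitz, uniform bounds follow from the energy estimate below, and passage to the limit uses the Aubin--Lions lemma for compactness in $(L^2(Q))^n$ together with continuity and boundedness of $\tilde H$ to identify the nonlinear term by dominated convergence. Uniqueness follows by testing the difference of two solutions against itself and invoking Gronwall's inequality, using the global Lipschitz constant of $\tilde H$ and the fact that $\gamma_i>0$.

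The $L^\infty$ bound is where the structure $0\le H\le 1$ is decisive. For each $i$ the $i$-th equation satisfies, pointwise, $\partial_t y_i - d_i\Delta y_i + \gamma_i y_i = \alpha_i(x)H(y) + f_i(u_i) \le \|\alpha_i\|_{L^\infty(\Omega)} + \|f_i(u_i)\|_{L^\infty(Q)} =: M_i$, which decouples the components. Comparing $y_i$ with the solution $z_i$ of the scalar ODE $\dot z_i = M_i - \gamma_i z_i$, $z_i(0)=\|y_{0,i}\|_{L^\infty(\Omega)}$ (which obeys $z_i(t)\le \|y_{0,i}\|_{L^\infty(\Omega)} + M_i/\gamma_i$) via a Stampacchia argument---testing the equation for $y_i - z_i$ with its positive part $(y_i-z_i)^+\in L^2(0,T;V)$ and integrating the resulting differential inequality for $\|(y_i-z_i)^+\|_{L^2(\Omega)}^2$---gives $(y_i-z_i)^+\equiv 0$, hence $\|y_i\|_{L^\infty(Q)} \le \|y_{0,i}\|_{L^\infty(\Omega)} + (\|\alpha_i\|_{L^\infty(\Omega)} + \|f_i(u_i)\|_{L^\infty(Q)})/\gamma_i$. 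The symmetric argument applied to $(y_i)^-$, using $\alpha_i H + f_i(u_i) \ge 0$ whenever $y_i=0$ (quasi-positivity, from $\alpha_i\ge 0$, $H\ge 0$, $f_i\ge 0$), yields $y_i\ge 0$; thus $y\in\mathbb{R}^n_+$ a.e., $\tilde H(y)=H(y)$, and $y$ is a weak solution of the original system~\eqref{eq:main_system}.

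For the $W(0,T)^n$ estimate, testing the $i$-th equation with $y_i$ and integrating over $\Omega$ gives $\tfrac{1}{2}\tfrac{d}{dt}\|y_i\|_{L^2(\Omega)}^2 + d_i\|\nabla y_i\|_{L^2(\Omega)}^2 + \gamma_i\|y_i\|_{L^2(\Omega)}^2 \le \big(|Q|^{1/2}\|\alpha_i\|_{L^\infty(\Omega)} + \|f_i(u_i)\|_{L^2(Q)}\big)\|y_i\|_{L^2(\Omega)}$; Young's inequality, Gronwall's inequality, and $\|f_i(u_i)\|_{L^2(Q)}\le C(1+\|u_i\|_{L^\infty(Q)}^m)$ then bound $\|y_i\|_{L^2(0,T;V)}$ by $C(\|y_{0,i}\|_{L^2(\Omega)} + \|\alpha_i\|_{L^\infty(\Omega)} + \|u_i\|_{L^\infty(Q)}^m)$ with $C$ depending only on $\Omega$, $T$, $d_i$, $\gamma_i$, and reading $\partial_t y_i = d_i\Delta y_i + \alpha_i H(y) + f_i(u_i) - \gamma_i y_i$ as an identity in $L^2(0,T;V^*)$ controls $\|\partial_t y_i\|_{L^2(0,T;V^*)}$; summing over $i$ and combining with the $L^\infty$ bound shows $y\in Y$ and completes the proof. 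I expect the main obstacle to be the rigorous justification of the Stampacchia comparison at the level of weak solutions (admissibility of the truncated test functions and validity of the chain rule in $W(0,T)$) and the bookkeeping ensuring consistency between the extension $\tilde H$ used for existence and the positivity recovered afterwards; the remaining steps are standard parabolic energy estimates, the only nonstandard feature being that the boundedness of $H$ renders all constants explicit as stated.
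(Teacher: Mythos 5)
Your proposal is correct in substance and delivers exactly the two estimates claimed, but it takes a different constructive route from the paper, which (in the sentence following the theorem) builds the solution by a Schauder fixed-point argument on a compact operator acting on $L^2(0,T;L^2(\Omega))^n$ rather than by a Faedo--Galerkin scheme. The two mechanisms are interchangeable here: Schauder needs only continuity and boundedness of the nonlinearity plus compactness of the solution operator (Aubin--Lions), and then uniqueness must be supplied separately by a Gronwall argument; your Galerkin construction exploits the global Lipschitz bound on the (extended) nonlinearity --- which Proposition~\ref{prop:bounded_derivatives} and $0\le H\le 1$ do provide --- and gives existence, uniqueness, and the energy estimate in one pass, at the cost of the usual bookkeeping with finite-dimensional projections. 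Your $L^\infty$ bound via the Stampacchia truncation/comparison with the spatially constant supersolution $z_i$ of $\dot z_i=M_i-\gamma_i z_i$ is essentially the truncation method the paper alludes to, and it correctly yields the stated componentwise constant $\|y_{0,i}\|_{L^\infty(\Omega)}+(\|\alpha_i\|_{L^\infty(\Omega)}+\|f_i(u_i)\|_{L^\infty(Q)})/\gamma_i$; your device of extending $H$ off $\mathbb{R}^n_+$ (e.g.\ $\tilde H(y)=H(y^+)$) and recovering nonnegativity afterwards is a sensible way to make the problem well-defined, though note it silently requires $y_{0,i}\ge 0$, an assumption the paper also leaves implicit since $H(y)$ is only defined on the positive orthant. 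Two minor bookkeeping points: in the energy estimate the factor $|Q|^{1/2}$ should be $|\Omega|^{1/2}$ at fixed time (the $|Q|^{1/2}$ only appears after integrating in time), and the constant absorbing $\|f_i(u_i)\|_{L^2(Q)}\le C(1+\|u_i\|_{L^\infty(Q)}^m)$ inevitably depends on the growth constant of $f$ as well, which is a blemish of the theorem statement rather than of your argument.
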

The well-posedness proof employs a fixed-point argument via Schauder's theorem, constructing solutions through a compact operator on $L^2(0,T;L^2(\Omega))^n$ and establishing $L^\infty$ bounds via truncation methods.
\begin{proposition}
	\label{prop:control_to_state}
	Let $u \in (L^\infty(Q))^n$, and consider the control-to-state mapping 
	$S: (L^\infty(Q))^n \to Y$ defined by $S(u) = y$, where $y$ is the 
	unique weak solution to \eqref{eq:main_system}. The mapping $S$ is Lipschitz continuous, i.e., there exists $L > 0$ such that, for any controls $u^1, u^2 \in (L^\infty(Q))^n$ with 
	corresponding states $y^1 = S(u^1)$ and $y^2 = S(u^2)$, 
	\begin{equation}
		\|S(u^1) - S(u^2)\|_{Y} \leq L \|u^1 - u^2\|_{(L^\infty(Q))^n}.
	\end{equation}
\end{proposition}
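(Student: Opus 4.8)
The plan is to subtract the two PDE systems satisfied by $y^1 = S(u^1)$ and $y^2 = S(u^2)$, write an evolution equation for the difference $z := y^1 - y^2$, and run a standard energy estimate combined with Gronwall's inequality, then upgrade to the full $Y$-norm. First I would note that $z$ solves, in the weak sense,
\begin{equation}
	\pd{z}{t} = D\Delta z + \alpha(x)\bigl(H(y^1) - H(y^2)\bigr) - \Gamma z + \bigl(f(u^1) - f(u^2)\bigr) \quad \text{in } Q,
\end{equation}
with homogeneous Neumann boundary data and zero initial condition $z(\cdot,0) = 0$. The term $\alpha(x)(H(y^1)-H(y^2))$ is controlled using Proposition \ref{prop:bounded_derivatives}: since $y^1, y^2 \geq 0$ (guaranteed by Theorem \ref{th:wellposedness}) and $H$ has uniformly bounded first derivatives on $\mathbb{R}^n_+$, the mean value inequality gives $|H(y^1(x,t)) - H(y^2(x,t))| \leq C_H |z(x,t)|$ pointwise, so with $\alpha \in (L^\infty(\Omega))^n$ this term is bounded in $L^2$ by a constant times $\|z(t)\|_{(L^2(\Omega))^n}$. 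The term $f(u^1) - f(u^2)$ is handled by the Lipschitz continuity of each $f_i$ on $\mathbb{R}^+$ (Assumption \ref{assume:input}(2)), which gives $\|f(u^1) - f(u^2)\|_{(L^2(Q))^n} \leq L_f |Q|^{1/2} \|u^1 - u^2\|_{(L^\infty(Q))^n}$.

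Next I would test the equation for $z$ with $z$ itself (component-wise), integrate over $\Omega$, and use the Neumann condition to discard the boundary contribution from $D\Delta z$, obtaining
\begin{equation}
	\frac{1}{2}\frac{d}{dt}\|z(t)\|_{(L^2(\Omega))^n}^2 + c_D \|\nabla z(t)\|_{(L^2(\Omega))^{nd}}^2 + c_\Gamma \|z(t)\|_{(L^2(\Omega))^n}^2 \leq C\|z(t)\|_{(L^2(\Omega))^n}^2 + \|f(u^1)-f(u^2)\|_{(L^2(\Omega))^n}\,\|z(t)\|_{(L^2(\Omega))^n},
\end{equation}
where $c_D := \min_i d_i > 0$ and $c_\Gamma := \min_i \gamma_i > 0$. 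Applying Young's inequality to the last term and then Gronwall's inequality (using $z(0)=0$) yields $\sup_{t \in (0,T)} \|z(t)\|_{(L^2(\Omega))^n}^2 \leq C_T \|u^1-u^2\|_{(L^\infty(Q))^n}^2$; integrating the differential inequality over $(0,T)$ then also bounds $\|\nabla z\|_{(L^2(Q))^{nd}}$, giving control of $\|z\|_{L^2(0,T;V^n)}$. To bound $\|\partial_t z\|_{L^2(0,T;(V^*)^n)}$ I would return to the weak form, estimate $\langle \partial_t z, \varphi\rangle$ by duality using the already-established $L^2(0,T;V^n)$ bound on $z$ together with the bounds on the reaction and input terms, giving the $W(0,T)^n$ part of the $Y$-norm.

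The remaining obstacle — and the main one — is the $L^\infty(Q)^n$ component of the $Y$-norm, which is not reachable from a plain $L^2$ energy estimate. Here I would invoke the $L^\infty$ machinery already announced in the paper: the truncation/De Giorgi–Stampacchia type argument used for the $L^\infty$ bound in Theorem \ref{th:wellposedness} applies to the difference equation as well, since its right-hand side lies in $L^\infty(Q)^n$ with norm controlled by $\|z\|_{L^\infty(Q)^n}$ (reaction term, via bounded $\partial H/\partial y_k$ and $\alpha \in L^\infty$) plus $\|f(u^1)-f(u^2)\|_{L^\infty(Q)^n} \leq L_f \|u^1-u^2\|_{(L^\infty(Q))^n}$ (Lipschitz $f_i$); absorbing the $\|z\|_{L^\infty}$ contribution for $T$ small and iterating over finitely many subintervals (or using the degradation term $-\Gamma z$ to close directly) gives $\|z\|_{L^\infty(Q)^n} \leq C \|u^1-u^2\|_{(L^\infty(Q))^n}$. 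Combining the two estimates gives the claimed Lipschitz bound with $L$ depending only on $\Omega$, $T$, $D$, $\Gamma$, the bound $C_H$ from Proposition \ref{prop:bounded_derivatives}, $\|\alpha\|_{(L^\infty(\Omega))^n}$, and the Lipschitz constants of the $f_i$.
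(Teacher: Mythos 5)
Your proof is correct and follows essentially the route the paper indicates (its own proof is only sketched as ``linearization around the state difference and parabolic regularity''): your mean-value use of the bounded gradient of $H$ from Proposition \ref{prop:bounded_derivatives} is exactly that linearization, and the energy/Gronwall estimate plus the truncation argument for the $L^\infty(Q)^n$ part supply the parabolic regularity details, including the correct handling of the coupled reaction term and the Lipschitz bound on $f$. The only minor caveat is that componentwise nonnegativity of $y^1,y^2$ (needed so the segment joining them stays in $\mathbb{R}^n_+$ for the mean-value step) is not literally stated in Theorem \ref{th:wellposedness}, though it is implicit in the paper's framework.
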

 The Lipschitz continuity follows from linearization around the state difference and application of parabolic regularity theory. 
We now investigate the differentiability properties of the control-to-state operator. 

\begin{proposition}\label{differentiability}
The control-to-state operator $S$ is twice continuously Fr\'echet-differentiable, i.e., $S$ has first and second derivatives $S'(u)$ and $S''(u)$ at each $u \in (L^{\infty}(Q))^n$, with both derivatives depending continuously on $u$.
\end{proposition}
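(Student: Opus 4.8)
The natural route is to realise $S$ as the solution branch of the state equation through the implicit function theorem, but since the solution space $Y$ already mixes $W(0,T)^n$ with $L^\infty(Q)^n$ I would in practice carry out the argument directly, by linearisation and bootstrap. Fix $u\in U$, put $y:=S(u)$, and for $h\in U$ let $z_h\in Y$ be the weak solution of the linearised system
$$\partial_t z - D\Delta z - \alpha(x)H'(y)\,z + \Gamma z = f'(u)\,h \ \text{ in } Q,\qquad \partial_\nu z = 0 \ \text{ on } \Sigma,\qquad z(\cdot,0)=0 .$$
Here $H'(y)\in(L^\infty(Q))^{n\times n}$ by Proposition~\ref{prop:bounded_derivatives} and $f'(u)=\mathrm{diag}(f_i'(u_i))\in(L^\infty(Q))^{n\times n}$ is bounded because each $f_i$ is Lipschitz; thus this is a linear parabolic system with bounded measurable coefficients and $L^\infty(Q)^n$ right-hand side, and the energy estimate together with the truncation argument underlying Theorem~\ref{th:wellposedness} yields a unique solution in $Y$ with $\|z_h\|_Y\le C\|h\|_U$. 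The map $h\mapsto z_h$ is linear and bounded from $U$ to $Y$, and it is the candidate for $S'(u)$.

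To confirm this, set $r_h:=S(u+h)-S(u)-z_h\in Y$. Subtracting the equations satisfied by $S(u+h)$, $S(u)$ and $z_h$ and writing $H(S(u+h))-H(S(u))=H'(y)\big(S(u+h)-S(u)\big)+\rho_1$ and $f(u+h)-f(u)=f'(u)h+\rho_2$ with first-order Taylor remainders $\rho_1,\rho_2$, one finds that $r_h$ solves the linearised system above with zero initial datum and right-hand side $\alpha(x)\rho_1+\rho_2$. By Proposition~\ref{prop:control_to_state}, $\|S(u+h)-S(u)\|_{Y}\le L\|h\|_U$; combined with the uniform continuity of $H'$ and of the $f_i'$ on the bounded sets in which the states (by Theorem~\ref{th:wellposedness}) and controls live, this gives $\|\alpha\rho_1+\rho_2\|_{L^\infty(Q)^n}=o(\|h\|_U)$, and hence $\|r_h\|_Y=o(\|h\|_U)$ by the linear estimate of the previous paragraph. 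Therefore $S$ is Fr\'echet-differentiable at $u$ with $S'(u)h=z_h$. Continuity of $u\mapsto S'(u)\in\mathcal L(U,Y)$ then follows because $(S'(u_1)-S'(u_2))h$ solves a linearised system whose coefficient and right-hand side differ by the terms $\alpha(H'(S(u_1))-H'(S(u_2)))$ and $(f'(u_1)-f'(u_2))h$, both $O(\|u_1-u_2\|_U)$ in $L^\infty(Q)$ by Proposition~\ref{prop:control_to_state} and uniform continuity; so $S\in C^1$.

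For the second derivative I would repeat the scheme one level up: the candidate $S''(u)(h_1,h_2)=:w$ is the solution in $Y$ of the linearised system with zero initial datum and right-hand side $\alpha(x)H''(S(u))[S'(u)h_1,\,S'(u)h_2]+f''(u)[h_1,h_2]$, which again lies in $L^\infty(Q)^n$ since $S'(u)h_j\in Y\subset L^\infty(Q)^n$, $H''$ is bounded on the (compact) range of the states, and $f''(u)$ is bounded for bounded $u$ because each $f_i\in C^\infty$; thus $(h_1,h_2)\mapsto w$ is bounded and bilinear from $U\times U$ to $Y$. Differentiating $h_1\mapsto S'(u)h_1$ in the direction $h_2$ and estimating the remainder exactly as before --- now Taylor-expanding $H'$ to second order and using $\|S'(u+h_2)-S'(u)\|_{\mathcal L(U,Y)}=O(\|h_2\|_U)$ from the previous step --- shows that $S'$ is Fr\'echet-differentiable with derivative $S''(u)$, and the continuity of $u\mapsto S''(u)$ follows as in the $C^1$ case. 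This gives $S\in C^2$, together with the explicit characterisations of $S'(u)$ and $S''(u)$ as solution operators of linearised systems that are needed for the adjoint-based optimality conditions of Section~\ref{sec5}.

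The main obstacle is the bookkeeping in the remainder estimates: every constant that appears --- the Lipschitz constant of $S$, the moduli of continuity of $H'$ and $H''$, the bounds on $H''$ and on $f',f''$ --- must be uniform over bounded sets of controls, which is legitimate precisely because Theorem~\ref{th:wellposedness} delivers $L^\infty(Q)$ state bounds that are uniform once $\|u\|_U$ is bounded, and because $H\in C^\infty$ on $\mathbb R^n_+$ has derivatives bounded on compacts. If one instead keeps the implicit-function-theorem formulation, the delicacy migrates to the choice of spaces: the superposition operators $y\mapsto\alpha H(y)$ and $u\mapsto f(u)$ are $C^\infty$ only between $L^\infty$-based spaces, whereas turning the linearised operator into a genuine isomorphism onto its codomain requires the maximal parabolic regularity that recovers both the $W(0,T)^n$ and the $L^\infty(Q)^n$ parts of $Y$ from the right-hand side --- reconciling these two requirements is exactly why the combined solution space $Y$ and the Lipschitz/polynomial-growth hypotheses on $f$ are imposed.
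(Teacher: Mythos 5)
Your argument is correct, but it proves the result by a genuinely different route than the paper. The paper's proof is an implicit-function-theorem argument: it writes the state equation as $F(y,u)=y-G_Q[\alpha H(y)+f(u)]-G_0[y_0]=0$, invokes the fact that $H,f\in C^\infty$ generate twice continuously differentiable Nemytskii operators on $L^\infty$-based spaces (composed with the linear solution operator $G_Q$), checks that $F_y(y,u)$ is boundedly invertible because the linearized parabolic system with the bounded coefficient $\alpha\,\nabla H(y)$ is uniquely solvable, and then reads off $C^2$-smoothness of $S$ (and, in the subsequent lemma, the formula $S'(\bar u)h=-F_y^{-1}F_u h$) all at once. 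You instead verify differentiability directly: you exhibit the candidate $z_h$ as the solution of the linearized system, estimate the Taylor remainders of $H$ and $f$ using the Lipschitz continuity of $S$ (Proposition~\ref{prop:control_to_state}), the uniform $L^\infty$ state bounds of Theorem~\ref{th:wellposedness}, and boundedness/continuity of $H'$, $H''$, $f'$, $f''$ on the relevant compact sets, and then iterate the scheme for $S''$. What your approach buys is that the explicit characterization of $S'(u)h$ as the solution of \eqref{eq:first_derivative_system} (the content of Lemma~\ref{cor:first_derivative}) and the analogous formula for $S''$ come out as by-products rather than requiring a separate computation, at the price of the remainder bookkeeping you describe; the paper's IFT route delegates exactly that bookkeeping to the known $C^2$-smoothness of superposition operators in $L^\infty$ and a single linear solvability statement. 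Two small points of care in your write-up: the claim that the coefficient and right-hand-side differences in the $C^1$-continuity step are $O(\|u_1-u_2\|_U)$ needs local Lipschitz continuity of $H'$ and $f'$ (which holds since $H,f\in C^\infty$ and states/controls range over bounded sets), not merely uniform continuity, though $o(1)$ would already suffice for continuity of $S'$; and both your argument and the paper's implicitly use that $H$ has bounded second derivatives on the bounded positive region visited by the states, which is the same smoothness hypothesis on $H$ the paper relies on for the Nemytskii differentiability.
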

\begin{proof}
We apply the implicit function theorem. Let $G_Q: [L^\infty(Q)]^n \to Y$ be the solution operator mapping source terms to solutions of the linear parabolic system $\partial_t w - D\Delta w + \Gamma w = v$ with homogeneous boundary and initial conditions. Define $F: Y \times (L^\infty(Q))^n \to Y$ by
$$F(y,u) = y - G_Q[\alpha(x)H(y) + f(u)] - G_0[y_0]$$
where $G_0$ maps initial data to solutions. Since $H, f \in C^\infty$, they generate twice differentiable Nemytskii operators in $L^\infty(Q)$\cite{Troeltzsch2010}. The composition with the linear operator $G_Q$ preserves differentiability.

For invertibility of $F_y(y,u)$, we need to solve $F_y(y,u)w = z$ for any $z \in Y$. This reduces to the linear parabolic system:
$$\partial_t w - D\Delta w + \Gamma w - \alpha(x)(\nabla H(y) \cdot w) = g$$
with appropriate data. Since $|\nabla H|$ is bounded and $\Gamma > 0$, standard parabolic theory guarantees unique solvability. The implicit function theorem then ensures $S$ is twice continuously Fr\'echet-differentiable.
\end{proof}

\begin{lemma}
\label{cor:first_derivative}
	Let $(\bar{y}, \bar{u})$ be a solution to  \eqref{eq:main_system} with $\bar{u} \in (L^\infty(Q))^n$. 
    Then, for any direction $h \in (L^\infty(Q))^n$, the first derivative $S'(\bar{u})h = w$ is the unique solution to the linearized system
	\begin{equation}
		\begin{cases}
			\pd{w}{t} = D\Delta w + \alpha(x)(\nabla H(\bar{y}) \cdot w) - \Gamma w + J_f(\bar{u})h & \text{in } Q, \\
			\pd{w}{\nu} = 0 \quad  \quad \text{on } \partial\Omega \times (0,T), \\
			w(x,0) = 0 \quad  \text{for } x \in \Omega,
		\end{cases}
		\label{eq:first_derivative_system}
	\end{equation}
	where $\nabla H(\bar{y})$ is the gradient of $H$ at $\bar{y}$ and $J_f(\bar{u})$ is the diagonal Jacobian matrix of $f$ evaluated at $\bar{u}$.
\end{lemma}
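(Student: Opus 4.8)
The plan is to read the derivative off the implicit function theorem setup already introduced in the proof of Proposition~\ref{differentiability}. Recall that there $F(y,u) = y - G_Q[\alpha(x)H(y) + f(u)] - G_0[y_0]$ satisfies $F(S(\bar u),\bar u)=0$ with $\bar y := S(\bar u)$, and that $F_y(\bar y,\bar u)$ is boundedly invertible on $Y$; hence the implicit function theorem gives $S'(\bar u) = -F_y(\bar y,\bar u)^{-1}F_u(\bar y,\bar u)$. So it suffices to identify the two partial derivatives explicitly and then translate the resulting abstract identity back into a parabolic PDE.

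First I would compute the partial derivatives. Since $f=(f_1(u_1),\dots,f_n(u_n))^\top$ with each $f_i\in C^\infty(\mathbb{R}^+)$, the associated Nemytskii operator is continuously Fr\'echet-differentiable from $(L^\infty(Q))^n$ into itself, with derivative given by pointwise multiplication by the diagonal Jacobian $J_f(\bar u)=\mathrm{diag}(f_1'(\bar u_1),\dots,f_n'(\bar u_n))$; composing with the bounded linear operator $G_Q$ yields $F_u(\bar y,\bar u)h = -G_Q[J_f(\bar u)h]$. Likewise, because $H\in C^\infty$ on a neighbourhood of the (bounded) range of $\bar y\in L^\infty(Q)^n$ and $\nabla H$ is globally bounded (Proposition~\ref{prop:bounded_derivatives}), the map $y\mapsto \alpha(x)H(y)$ is a continuously differentiable Nemytskii operator on $L^\infty(Q)^n$, so $F_y(\bar y,\bar u)w = w - G_Q[\alpha(x)(\nabla H(\bar y)\cdot w)]$. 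Consequently $w := S'(\bar u)h$ is the unique element of $Y$ satisfying $F_y(\bar y,\bar u)w = -F_u(\bar y,\bar u)h$, i.e.
\[
w - G_Q\big[\alpha(x)(\nabla H(\bar y)\cdot w)\big] = G_Q\big[J_f(\bar u)h\big],
\qquad\text{equivalently}\qquad
w = G_Q\big[\alpha(x)(\nabla H(\bar y)\cdot w) + J_f(\bar u)h\big].
\]
By definition $G_Q$ sends a source term to the weak solution of $\partial_t v - D\Delta v + \Gamma v = (\cdot)$ with homogeneous Neumann data and zero initial data, so this identity is precisely the weak formulation of \eqref{eq:first_derivative_system}; this identifies $S'(\bar u)h$ with the solution $w$ of the linearized system.

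Two points remain. (i) Well-posedness of \eqref{eq:first_derivative_system} in $Y$: uniqueness and existence in $W(0,T)^n$ are exactly the invertibility of the operator $I - G_Q[\alpha(x)(\nabla H(\bar y)\cdot(\,\cdot\,))]$ already established in the proof of Proposition~\ref{differentiability} (equivalently, an energy estimate plus Gronwall, using that $\alpha\,\nabla H(\bar y)\in L^\infty(Q)$); and since the reaction coefficient is bounded, $J_f(\bar u)h\in L^\infty(Q)^n$ and the initial data vanish, the $L^\infty(Q)^n$ bound follows from the same Stampacchia-type truncation argument used for Theorem~\ref{th:wellposedness}, giving $w\in Y$. (ii) Consistency of the two descriptions of $w$ is then immediate from this uniqueness.

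The main obstacle is not a single deep step but the careful justification that $y\mapsto\alpha(x)H(y)$ is Fr\'echet-differentiable on $L^\infty(Q)^n$ with derivative $w\mapsto\alpha(x)(\nabla H(\bar y)\cdot w)$, together with the $L^\infty$ regularity of $w$ with bounded (but merely measurable) reaction coefficients; both rely essentially on the bounded-gradient property of $H$. As an alternative avoiding the implicit function machinery, one could argue directly: let $y^h:=S(\bar u+h)$, set $z:=y^h-\bar y-w$, derive the linear parabolic system solved by $z$ (its source being the first-order Taylor remainders of $\alpha H$ at $\bar y$ and of $f$ at $\bar u$), and use the Lipschitz bound of Proposition~\ref{prop:control_to_state} together with the $L^\infty$ estimates of Theorem~\ref{th:wellposedness} to show the source is $o(\|h\|_{(L^\infty(Q))^n})$; parabolic regularity then yields $\|z\|_Y = o(\|h\|_{(L^\infty(Q))^n})$, which is the claim.
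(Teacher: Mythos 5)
Your proposal is correct and follows essentially the same route as the paper: both read $S'(\bar u)h$ off the implicit-function-theorem identity $S'(\bar u)=-F_y(\bar y,\bar u)^{-1}F_u(\bar y,\bar u)$ from Proposition~\ref{differentiability}, compute $F_u(\bar y,\bar u)h=-G_Q[J_f(\bar u)h]$ and $F_y(\bar y,\bar u)w=w-G_Q[\alpha(x)(\nabla H(\bar y)\cdot w)]$, and identify the resulting fixed-point identity with the weak formulation of \eqref{eq:first_derivative_system}. Your extra care about Nemytskii differentiability and the $L^\infty$ bound on $w$ only fills in details the paper leaves implicit.
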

\begin{proof}
By the implicit function theorem, $S'(\bar{u})h = -[F_y(\bar{y},\bar{u})]^{-1}F_u(\bar{y},\bar{u})h$. Computing $F_u(\bar{y},\bar{u})h = -G_Q[J_f(\bar{u})h]$ and applying $F_y(\bar{y},\bar{u})w = -F_u(\bar{y},\bar{u})h$, we obtain that $w = S'(\bar{u})h$ solves the linearized system \eqref{eq:first_derivative_system}.
\end{proof}
\section{Optimal Control Problem}
\label{sec5}
In this section, we formulate and analyze the 
optimal control problem for the reaction-diffusion system \eqref{eq:main_system}. 

We seek optimal control inputs that drive the system to a desired spatial pattern at time $T$ while minimizing the control effort.

Let $y_\Omega \in (L^\infty(\Omega))^n$ denote the desired-target state. The performance of the system is measured by the cost functional
\begin{equation}
	J(y,u) := \frac{\mu}{2}\int_\Omega \|y(x,T)-y_\Omega(x)\|^2\,dx+\frac{\lambda}{2}\int_Q \|u(x,t)\|^2\,dxdt,
	\label{eq:objective_functional}
\end{equation}
with weighting parameters $\mu> 0$ and $\lambda > 0$.

The admissible control functions are defined by
\begin{equation}
U_{ad} = \{u \in (L^\infty(Q))^n: u_a(x,t) \le u(x,t) \le u_b(x,t) \text{ a.e. in } Q\}
\end{equation}
with $u_a, u_b \in (L^\infty(Q))^n$ and $u_{a,i}(x,t) \ge 0$ for all $i = 1,\ldots,n$.

Thus, the optimal control problem is formulated as
\begin{equation}
	\begin{aligned}
		\min_{u\in U_{ad}} \quad & J(y,u), 
		\text{ subject to}  \text{ \eqref{eq:main_system}}.
	\end{aligned}
	\label{eq:optimal_control_problem}
\end{equation}

\begin{theorem}
\label{thm:existence}
	For the optimal control problem \eqref{eq:optimal_control_problem}, there exists at least one solution $u^* \in U_{ad}$ such that
	\begin{equation}
		J(S(u^*),u^*) = \min_{u \in U_{ad}} J(S(u),u),
	\end{equation}
	where $S: (L^\infty(Q))^n \to Y$ is the control-to-state mapping.
\end{theorem}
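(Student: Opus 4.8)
The plan is to use the direct method of the calculus of variations. Since $J\ge 0$, $U_{ad}$ is nonempty (it contains $u_a$), and $J(S(u),u)<\infty$ for every $u\in U_{ad}$ because $S(u)\in Y\hookrightarrow C([0,T];L^2(\Omega))^n$ by Theorem~\ref{th:wellposedness} and $u\in L^2(Q)^n$, the value $j:=\inf_{u\in U_{ad}}J(S(u),u)$ is finite and nonnegative. Fix a minimizing sequence $(u_k)\subset U_{ad}$ with $J(S(u_k),u_k)\to j$ and put $y_k:=S(u_k)$. I would then extract a subsequence along which $(u_k,y_k)$ converges weakly, show the limit is an admissible control together with its associated state, and close the argument by weak lower semicontinuity of $J$.

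\emph{Compactness.} From $u_a\le u_k\le u_b$ a.e., $(u_k)$ is bounded in $(L^\infty(Q))^n$, hence in $(L^2(Q))^n$; after passing to a subsequence, $u_k\rightharpoonup u^*$ weakly in $(L^2(Q))^n$ (and weakly-$\ast$ in $(L^\infty(Q))^n$). As $U_{ad}$ is convex and strongly closed in $(L^2(Q))^n$, it is weakly closed, so $u^*\in U_{ad}$. By Assumption~\ref{assume:input}(3) and the uniform bound on $\|u_k\|_{(L^\infty(Q))^n}$, the sources $f(u_k)$ are bounded in $(L^\infty(Q))^n$, so the estimates of Theorem~\ref{th:wellposedness} give a uniform bound for $(y_k)$ in $W(0,T)^n\cap L^\infty(Q)^n$. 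Since $H^1(\Omega)\hookrightarrow\hookrightarrow L^2(\Omega)\hookrightarrow V^*$, the Aubin--Lions lemma yields a compact embedding $W(0,T)^n\hookrightarrow\hookrightarrow L^2(Q)^n$, so along a further subsequence $y_k\to y^*$ strongly in $L^2(Q)^n$ and a.e.\ in $Q$, $y_k\rightharpoonup y^*$ in $W(0,T)^n$, and $y_k\to y^*$ weakly-$\ast$ in $L^\infty(Q)^n$. Because the terminal-trace map $v\mapsto v(\cdot,T)$ is continuous — hence weakly continuous — from $W(0,T)$ to $L^2(\Omega)$, we also obtain $y_k(\cdot,T)\rightharpoonup y^*(\cdot,T)$ in $(L^2(\Omega))^n$.

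\emph{Passing to the limit and concluding.} In the weak formulation of \eqref{eq:main_system} for $(y_k,u_k)$, the linear terms pass to the limit by $y_k\rightharpoonup y^*$ in $W(0,T)^n$, and the initial condition is preserved through the embedding into $C([0,T];L^2(\Omega))^n$. For the reaction term, $H$ is continuous and bounded ($0\le H\le 1$) and $\alpha\in(L^\infty(\Omega))^n$, so $y_k\to y^*$ a.e.\ and dominated convergence give $\alpha(x)H(y_k)\to\alpha(x)H(y^*)$ in $(L^2(Q))^n$. Weak-$\ast$ compactness gives $f(u_k)\to g$ weakly-$\ast$ in $(L^\infty(Q))^n$ for some $g$, and identifying $g=f(u^*)$ shows that $y^*$ is the weak solution of \eqref{eq:main_system} driven by $u^*$, i.e.\ $y^*=S(u^*)$. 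Finally, $u\mapsto\tfrac{\lambda}{2}\|u\|_{(L^2(Q))^n}^2$ and $z\mapsto\tfrac{\mu}{2}\|z-y_\Omega\|_{(L^2(\Omega))^n}^2$ are convex and strongly continuous, hence weakly lower semicontinuous; applied to $u_k\rightharpoonup u^*$ and $y_k(\cdot,T)\rightharpoonup y^*(\cdot,T)$ this gives $J(S(u^*),u^*)\le\liminf_k J(y_k,u_k)=j$, and since $J(S(u^*),u^*)\ge j$ trivially, $u^*$ is a minimizer.

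\emph{Main obstacle.} The delicate step is the identification $g=f(u^*)$: weak(-$\ast$) convergence of the controls is not in general preserved by the nonlinear superposition operator $f$. When each $f_i$ is affine — the case used in the Nodal--Lefty study of Section~\ref{sec6} — this is immediate from linearity. In the general polynomial-growth setting one needs an extra ingredient, e.g.\ a convexity/relaxation (Cesari-type) hypothesis on $f$ or a strengthening of the control convergence; I would therefore carry out the proof in full for affine input-gain functions and flag the additional assumption needed otherwise. Every other step is routine and uses only the a priori bounds of Theorem~\ref{th:wellposedness} and standard parabolic compactness.
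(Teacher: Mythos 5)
Your argument follows essentially the same route as the paper's proof: minimizing sequence, weak-$\ast$ compactness of the controls via Banach--Alaoglu, uniform state bounds from Theorem~\ref{th:wellposedness}, Aubin--Lions compactness giving strong $L^2(Q)^n$ convergence of the states, passage to the limit in the weak formulation, and weak(-$\ast$) lower semicontinuity of the two quadratic terms. The one point where you go beyond the paper is the obstacle you flag: identifying the weak-$\ast$ limit of $f(u_k)$ with $f(u^*)$. That concern is well founded --- the paper's proof simply asserts that ``passing to the limit in the weak formulation shows $y^* = S(u^*)$'' and does not address the fact that the nonlinear superposition operator $f$ does not commute with weak-$\ast$ convergence of the controls, which is exactly the step you isolate. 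Your resolution (immediate for affine input gains, as in the Nodal--Lefty application where $f_i(u_i)=\beta_i u_i$; requiring a convexity/Cesari-type hypothesis on $f$ or a relaxation argument in the general polynomial-growth setting of Assumption~\ref{assume:input}) is therefore not a gap relative to the paper but a more careful account of the same argument; all other steps in your proposal match the paper's proof and are correct.
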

\begin{proof}
Let $\{u^k\} \subset U_{ad}$ be a minimizing sequence for $J$. Since $U_{ad}$ is bounded in $(L^\infty(Q))^n$, by Banach-Alaoglu there exists a subsequence (still denoted $\{u^k\}$) and $u^* \in U_{ad}$ such that $u^k \rightharpoonup u^*$ weakly-* in $(L^\infty(Q))^n$. The corresponding states $y^k = S(u^k)$ are uniformly bounded in $Y$ by Theorem 3. By Aubin-Lions compactness, extracting a further subsequence, $y^k \to y^*$ strongly in $L^2(Q)^n$ for some $y^*$. Passing to the limit in the weak formulation shows $y^* = S(u^*)$. The cost functional satisfies
$$J(S(u^*),u^*) \leq \liminf_{k \to \infty} J(S(u^k),u^k)$$
by weak-* lower semicontinuity of $\|u\|^2$ and strong convergence of $y^k(T)$ in $L^2(\Omega)^n$. Thus $u^*$ is optimal.
\end{proof}
We now establish the explicit form of the first derivative of the reduced-cost function, which characterizes the sensitivity of the system's response to infinitesimal perturbations in the control input.

\begin{proposition}
\label{prop:reduced_cost_derivative}
	Let $j(u) := J(S(u), u)$ be the reduced cost functional. For any $\bar{u} \in U_{ad}$ with the corresponding state $\bar{y} = S(\bar{u})$, the directional derivative of $j$ at $\bar{u}$ in the direction $h \in (L^\infty(Q))^n$ is given by
	\begin{equation}
		j'(\bar{u})(h) = \int_Q \left[ \lambda \bar{u} + f'(\bar{u})^T p \right] \cdot h \, dxdt,
	\end{equation}
	where $p$ is the adjoint state solving the backward system
	\begin{equation}
		\begin{cases}
			-\frac{\partial p}{\partial t} - D \Delta p + \Gamma p - \alpha(x) \nabla H(\bar{y})^T p = 0 \quad &\text{in } Q, \\
			\frac{\partial p}{\partial \nu} = 0 \quad &\text{on } \Sigma, \\
			p(x,T) = \mu(\bar{y}(x,T) - y_\Omega(x)) \quad &\text{in } \Omega.
		\end{cases}
        \label{eq:adjoint}
	\end{equation}
\end{proposition}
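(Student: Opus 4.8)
The plan is to differentiate the reduced cost by the chain rule and then convert the term involving the linearized state into a control-only expression via the standard adjoint (duality) argument.

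First I would record the chain rule. By Proposition~\ref{differentiability} the control-to-state map $S$ is Fr\'echet differentiable, and $J$ in \eqref{eq:objective_functional} is a $C^\infty$ (in fact quadratic) functional on $Y\times(L^\infty(Q))^n$: its terminal term is the composition of the continuous trace $y\mapsto y(\cdot,T)$ from $W(0,T)^n$ into $(L^2(\Omega))^n$ with $z\mapsto\tfrac{\mu}{2}\|z-y_\Omega\|^2_{(L^2(\Omega))^n}$, while its control term is a continuous quadratic form on $(L^2(Q))^n\supset(L^\infty(Q))^n$. Writing $\bar y=S(\bar u)$ and $w:=S'(\bar u)h$, the chain rule gives
\[
j'(\bar u)(h)=\mu\int_\Omega\big(\bar y(x,T)-y_\Omega(x)\big)\cdot w(x,T)\,dx+\lambda\int_Q\bar u\cdot h\,dx\,dt,
\]
and by Lemma~\ref{cor:first_derivative} the function $w$ is the unique $W(0,T)^n$-solution of the linearized system \eqref{eq:first_derivative_system} with forcing $J_f(\bar u)h\in(L^\infty(Q))^n$ (each $f_i$ being Lipschitz). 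Next I would note that the adjoint system \eqref{eq:adjoint} is well posed: under the time reversal $t\mapsto T-t$ it becomes a forward linear parabolic system with homogeneous Neumann data, initial datum $\mu(\bar y(\cdot,T)-y_\Omega)\in(L^\infty(\Omega))^n$, and zeroth-order coefficient built from $\alpha(x)$ and $\nabla H(\bar y(x,t))$, which lies in $(L^\infty(Q))^{n\times n}$ by Proposition~\ref{prop:bounded_derivatives} together with $\alpha\in(L^\infty(\Omega))^n$; standard linear parabolic theory then yields a unique $p\in W(0,T)^n$.

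The core step is the duality identity. I would test \eqref{eq:first_derivative_system} with $p$ over $Q$, integrate the time-derivative term by parts in $t$ using the integration-by-parts formula valid in $W(0,T)$ (the endpoint term at $t=0$ drops since $w(\cdot,0)=0$), and integrate the diffusion term by parts in $x$ via the Neumann condition on $w$, turning it into the symmetric form $\sum_i d_i\int_Q\nabla w_i\cdot\nabla p_i$. Using the pointwise identity $p\cdot\big(\alpha(x)(\nabla H(\bar y)\cdot w)\big)=w\cdot\big((\alpha(x)\cdot p)\,\nabla H(\bar y)\big)$ for the reaction part, the remaining volume terms are recognized as exactly the weak form of the adjoint system \eqref{eq:adjoint} tested with $w$ (after the analogous $x$-integration by parts using the Neumann condition on $p$), which vanishes. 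What survives is
\[
\int_\Omega w(x,T)\cdot p(x,T)\,dx=\int_Q p\cdot J_f(\bar u)h\,dx\,dt=\int_Q\big(f'(\bar u)^\top p\big)\cdot h\,dx\,dt,
\]
and substituting $p(\cdot,T)=\mu(\bar y(\cdot,T)-y_\Omega)$ into the chain-rule formula yields $j'(\bar u)(h)=\int_Q[\lambda\bar u+f'(\bar u)^\top p]\cdot h\,dx\,dt$, the claim.

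I expect the main obstacle to be making the duality pairing rigorous at the regularity actually available: $w_i$ and $p_i$ lie only in $L^2(0,T;H^1(\Omega))$ with time derivatives in the dual of $H^1(\Omega)$, so the Laplacian terms must be handled through the bilinear form rather than pointwise, the boundary integrals being absorbed into the Neumann test space, and the temporal endpoint terms require the $W(0,T)$ integration-by-parts lemma. The reaction-term cancellation, though it reads as a one-line transpose identity, hinges on careful index bookkeeping for the rank-one object $\alpha(x)\,\nabla H(\bar y)^\top$, and one must verify that the operator in \eqref{eq:adjoint} is indeed the formal adjoint of the linearized operator in \eqref{eq:first_derivative_system}.
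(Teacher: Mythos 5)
Your proposal follows essentially the same route as the paper's own proof: apply the chain rule to get $j'(\bar u)(h)=\mu\int_\Omega(\bar y(T)-y_\Omega)\cdot w(T)\,dx+\lambda\int_Q\bar u\cdot h\,dx\,dt$ with $w=S'(\bar u)h$ from Lemma~\ref{cor:first_derivative}, then eliminate the terminal term by testing the linearized/adjoint pair against each other, integrating by parts in time (using $w(\cdot,0)=0$) and in space (using the Neumann conditions), and substituting $p(\cdot,T)=\mu(\bar y(\cdot,T)-y_\Omega)$. The extra remarks you add on well-posedness of the adjoint system and on the $W(0,T)$ integration-by-parts lemma are sound refinements of the same argument, not a different method.
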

\begin{proof}
The derivative of $j$ at $\bar{u}$ in direction $h$ is:
$$j'(\bar{u})(h) = \mu \int_\Omega (\bar{y}(x,T) - y_\Omega(x)) \cdot w(x,T) \, dx + \lambda \int_Q \bar{u} \cdot h \, dxdt$$
where $w = S'(\bar{u})h$ solves the linearized system from Corollary \ref{cor:first_derivative}.

To eliminate the $w(x,T)$ term, introduce the adjoint state $p$ solving \eqref{eq:adjoint}. Multiplying the adjoint equation by $w$ and integrating over $Q$:
$$0 = \int_Q \left[-\frac{\partial p}{\partial t} - D \Delta p + \Gamma p - \alpha(x) \nabla H(\bar{y})^T p \right] \cdot w \, dxdt.$$

Integration by parts in time (using $w(x,0) = 0$) and space (using Neumann conditions) yields:
\begin{align}
0 &= \int_Q p \cdot \left[ \frac{\partial w}{\partial t} - D \Delta w - \alpha(x) \nabla H(\bar{y}) \cdot w + \Gamma w \right] dxdt \nonumber\\
&\quad - \int_\Omega p(x,T) \cdot w(x,T) \, dx
\end{align}

Substituting the linearized equation for $w$ and the terminal condition $p(x,T) = \mu(\bar{y}(x,T) - y_\Omega(x))$:
$$\int_\Omega \mu(\bar{y}(x,T) - y_\Omega(x)) \cdot w(x,T) \, dx = \int_Q p \cdot f'(\bar{u})h \, dxdt.$$

Therefore:
$$j'(\bar{u})(h) = \int_Q \left[ \lambda \bar{u} + f'(\bar{u})^T p \right] \cdot h \, dxdt.$$
\end{proof}

The following theorem provides the key characterization of optimal controls through a variational inequality, which serves as the foundation for the numerical implementation presented in Section V.
\begin{theorem}[First-order necessary optimality condition]
	\label{thm:first_order_optimality}
	Let $\bar{u} \in U_{ad}$ be a locally-optimal control for the problem \eqref{eq:optimal_control_problem},  with the corresponding state $\bar{y} = S(\bar{u})$. Then, $\bar{u}$ satisfies the variational inequality
	\begin{equation}
		\int_Q \left( \lambda \bar{u} + f'(\bar{u})^\top p \right) \cdot (u - \bar{u}) \, dxdt \geq 0 \quad \forall u \in U_{ad},
	\end{equation}
	where $p$ is the adjoint state described in Proposition \ref{prop:reduced_cost_derivative}.
\end{theorem}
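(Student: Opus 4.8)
The plan is to exploit the convexity of $U_{ad}$ together with the differentiability of the reduced cost functional $j(u) := J(S(u),u)$ and the representation of $j'(\bar u)$ established in Proposition \ref{prop:reduced_cost_derivative}. Since $S$ is continuously Fréchet-differentiable by Proposition \ref{differentiability} and $J$ is a smooth quadratic functional, the composition $j$ is continuously Fréchet-differentiable on $(L^\infty(Q))^n$, so it admits directional derivatives everywhere.

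First I would fix an arbitrary $u \in U_{ad}$ and, using that $U_{ad}$ is convex, set $u_t := \bar u + t(u - \bar u) = (1-t)\bar u + t u \in U_{ad}$ for every $t \in [0,1]$. Because $\|u_t - \bar u\|_{(L^\infty(Q))^n} = t\,\|u - \bar u\|_{(L^\infty(Q))^n} \to 0$ as $t \to 0^+$, the feasible points $u_t$ eventually lie inside the $(L^\infty(Q))^n$-neighborhood on which $\bar u$ is a local minimizer; hence $j(u_t) \ge j(\bar u)$ for all sufficiently small $t > 0$.

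Next I would introduce the scalar auxiliary function $\varphi(t) := j(u_t)$ on $[0,\varepsilon]$ for a suitable $\varepsilon > 0$. By the chain rule and Fréchet-differentiability of $j$, $\varphi$ is differentiable from the right at $0$ with $\varphi'(0^+) = j'(\bar u)(u - \bar u)$. Since $\varphi(t) \ge \varphi(0)$ for all $t \in [0,\varepsilon]$, the right derivative is nonnegative, i.e.\ $j'(\bar u)(u - \bar u) \ge 0$. Substituting the expression for $j'(\bar u)(h)$ from Proposition \ref{prop:reduced_cost_derivative} with $h = u - \bar u$, where $p$ is the adjoint state solving \eqref{eq:adjoint}, yields
\[
\int_Q \left( \lambda \bar u + f'(\bar u)^\top p \right) \cdot (u - \bar u)\,dxdt \ge 0,
\]
and since $u \in U_{ad}$ was arbitrary this is precisely the claimed variational inequality.

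The argument is essentially routine; the only point I would be careful about is the compatibility of the local optimality notion — stated in the $(L^\infty(Q))^n$ topology — with the feasible-direction construction. This is exactly what the convergence $u_t \to \bar u$ in $(L^\infty(Q))^n$ guarantees, so no genuine obstacle arises. I would also note in passing that convexity of $U_{ad}$ is what makes $u_t$ admissible; without it one would instead need the full tangent-cone formulation, which is not required here.
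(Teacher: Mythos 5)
Your proof is correct and follows exactly the route the paper intends (the paper omits this proof for space, but its placement of Proposition~\ref{prop:reduced_cost_derivative} immediately beforehand signals the same argument): convexity of $U_{ad}$ plus local optimality gives $j'(\bar u)(u-\bar u)\ge 0$ via the one-sided difference quotient along $u_t=\bar u+t(u-\bar u)$, and substituting the adjoint-based representation of $j'(\bar u)$ yields the variational inequality. Your remark on the compatibility of the $(L^\infty(Q))^n$ local-optimality neighborhood with the feasible-direction construction is exactly the right point to check, and it is handled correctly.
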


\section{Application to Synthetic developmental biology}
\label{sec6}
\subsection{The Nodal-Lefty Model}
The Nodal-Lefty reaction-diffusion system, experimentally realized by Sekine et al. \cite{sekine}, motivates our control framework as it exemplifies the challenge of directing self-organized patterns in synthetic biology. Such a system represents a classic activator-inhibitor mechanism where Nodal (activator) promotes both its own production and that of Lefty (inhibitor), while Lefty inhibits Nodal activity. The key feature enabling pattern formation is the significant difference in diffusion rates, Nodal diffuses locally while Lefty spreads more widely \cite{muller2012differential}. 

The Nodal-Lefty dynamics are governed by

\begin{equation}
	\begin{cases}
    \begin{aligned}
		\frac{\partial y_n}{\partial t} &= \alpha_{n}(x) H(y_n,y_l) + \beta_{n}u_n - \gamma_{n} y_n + D_{n} \Delta y_n & \text{in } Q,\\
		\frac{\partial y_l}{\partial t} &= \alpha_{l}(x)H(y_n,y_l) + \beta_{l}u_l - \gamma_{l} y_l + D_{l} \Delta y_l, & \text{in } Q, \\
		\frac{\partial y_n}{\partial \nu} &= \frac{\partial y_l}{\partial \nu} = 0, & \text{on } \Sigma, \\
		y_n(x, 0) &= y_{n,0}(x), \quad y_l(x, 0) = y_{l,0}(x), & \text{in } \Omega.
    \end{aligned}
	\end{cases}
    \label{RD_nodal}
\end{equation}

Here, $y_n$ and $y_l$ represent Nodal and Lefty concentrations, respectively. The parameters $\alpha_n$, $\alpha_l$, $\beta_n$, and $\beta_l$ are the maximum production rates and control-input effects (nM·min$^{-1}$). The regulatory function models competitive inhibition:

\begin{equation}
	H(y_n,y_l) := \dfrac{y_n^{n_n}}{y_n^{n_n}+\left[k_n\left\{1+\left(\frac{y_l}{k_l}\right)^{n_l}\right\}\right]^{n_n}}.
\end{equation}

The system parameters taken from  \cite{sekine} are: $D_{n} = 1.96$ $\mu\text{m}^2\text{min}^{-1}$, $D_{l} = 56.39$ $\mu\text{m}^2\text{min}^{-1}$, $\gamma_{n} = 2.37 \times 10^{-3}$ $\text{min}^{-1}$, $\gamma_{l} = 5.65 \times 10^{-3}$ $\text{min}^{-1}$, $n_{n} = 2.63$, $n_{l} = 1.09$, $k_{n} = 9.28$ nM, $k_{l} = 14.96$ nM.

\subsection{Optimal Control Problem Formulation}
We formulate the optimal control problem for the Nodal-Lefty system as:

\begin{equation}
	\begin{aligned}
		\min_{u \in U_{ad}} J[y_n, y_l, u] &= \dfrac{\mu}{2}\int_{\Omega}(y_n(x, T)-y_{n,\Omega}(x))^2 dx \\
		&+ \dfrac{\mu}{2}\int_{\Omega}(y_l(x, T)-y_{l,\Omega}(x))^2 dx \\
		&+ \dfrac{\lambda}{2}\int_0^T \int_{\Omega} (u_n(x, t)^2 + u_l(x, t)^2) dx dt,
	\end{aligned}
\end{equation}

subject to the state equations \eqref{RD_nodal} and the admissible control set:
\begin{equation*}
	U_{ad} = \{u \in (L^{\infty}(Q))^2 \mid u_a \leq u(x, t) \leq u_b \text{ a.e. in } Q\},
\end{equation*}
where $u_a = (0,0)^T$ and $u_b = (1,1)^T$.

The desired states $(y_{n,\Omega}, y_{l,\Omega})$ correspond to different Turing patterns generated by varying $\alpha_n$ and $\alpha_l$. Our control objective consists in guiding the system from one stable pattern configuration to another target pattern by modulating the dynamical spatial distribution of the inputs $(u_n, u_l)$.

\subsection{Optimality Conditions and Numerical Results}

\subsubsection{Optimality Conditions}

We establish the necessary optimality conditions specific to the Nodal-Lefty system based on Theorem \ref{thm:first_order_optimality}.

\begin{corollary}
	Let $\bar{u} = (\bar{u}_n, \bar{u}_l) \in U_{ad}$ be an optimal control for the Nodal-Lefty system with corresponding states $\bar{y}_n$ and $\bar{y}_l$. Then $\bar{u}$ satisfies the variational inequality
	\begin{equation}
		\int_Q \left[ \lambda \bar{u}_n + \beta_n p_n \right] \cdot (u_n - \bar{u}_n) + \left[ \lambda \bar{u}_l + \beta_l p_l \right] \cdot (u_l - \bar{u}_l) \, dxdt \geq 0
	\end{equation}
	for all $u := (u_n, u_l) \in U_{ad}$, where $(p_n, p_l)$ is the adjoint state satisfying the backward system:
	\begin{equation}
		\begin{cases}
			-\frac{\partial p_n}{\partial t} - D_n \Delta p_n + \gamma_n p_n - \alpha_n \frac{\partial H}{\partial y_n}p_n - \alpha_l \frac{\partial H}{\partial y_n}p_l = 0, & \text{in } Q, \\
			-\frac{\partial p_l}{\partial t} - D_l \Delta p_l + \gamma_l p_l - \alpha_n \frac{\partial H}{\partial y_l}p_n - \alpha_l \frac{\partial H}{\partial y_l}p_l = 0, & \text{in } Q, \\
			\frac{\partial p_n}{\partial \nu} = \frac{\partial p_l}{\partial \nu} = 0, & \text{on } \Sigma, \\
			p_n(x,T) = \mu(\bar{y}_n(x,T) - y_{n,\Omega}(x)), & \text{in } \Omega, \\
			p_l(x,T) = \mu(\bar{y}_l(x,T) - y_{l,\Omega}(x)), & \text{in } \Omega.
		\end{cases}
	\end{equation}
\end{corollary}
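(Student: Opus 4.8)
The plan is to derive the corollary as a direct specialization of Theorem~\ref{thm:first_order_optimality} to the Nodal--Lefty system~\eqref{RD_nodal}. The first step is to identify \eqref{RD_nodal} as an instance of the abstract system~\eqref{eq:main_system} with $n=2$, state $y=(y_n,y_l)^\top$, $D=\mathrm{diag}(D_n,D_l)$, $\Gamma=\mathrm{diag}(\gamma_n,\gamma_l)$, production vector $\alpha(x)=(\alpha_n(x),\alpha_l(x))^\top$, a single scalar regulatory function $H(y_n,y_l)$, and input-gain $f(u)=(\beta_n u_n,\beta_l u_l)^\top$. One then verifies that the standing hypotheses of Sections~\ref{sec3}--\ref{sec5} hold: each $f_i$ is linear, hence $C^\infty$, globally Lipschitz, of polynomial growth with $m=1$, and nonnegative on $\mathbb{R}^+$ since $\beta_n,\beta_l\ge 0$, so Assumption~\ref{assume:input} is satisfied; the data $y_{n,0},y_{l,0}$ lie in $L^\infty(\Omega)$; and, because $n_n=2.63\ge 1$ and $n_l=1.09\ge 1$, the function $H$ is $C^\infty$ on $\mathbb{R}^2_+$ with uniformly bounded first derivatives, as in Proposition~\ref{prop:bounded_derivatives}. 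Consequently Theorem~\ref{th:wellposedness}, Propositions~\ref{prop:control_to_state} and~\ref{differentiability}, Proposition~\ref{prop:reduced_cost_derivative}, and Theorem~\ref{thm:first_order_optimality} all apply to \eqref{RD_nodal}.

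With the abstract machinery available, the second step is essentially bookkeeping. From Theorem~\ref{thm:first_order_optimality}, any locally optimal $\bar u$ satisfies $\int_Q(\lambda\bar u+f'(\bar u)^\top p)\cdot(u-\bar u)\,dxdt\ge 0$ for all $u\in U_{ad}$; here $f'(\bar u)=\mathrm{diag}(\beta_n,\beta_l)$ is constant, so $f'(\bar u)^\top p=(\beta_n p_n,\beta_l p_l)^\top$, which is precisely the componentwise variational inequality in the statement. For the adjoint system, expand the abstract coupling term $\alpha(x)\nabla H(\bar y)^\top p$ of~\eqref{eq:adjoint}: linearizing the reaction term of~\eqref{RD_nodal} about $\bar y$ produces the matrix with $(i,k)$ entry $\alpha_i(x)\,\partial H/\partial y_k(\bar y)$, whose transpose applied to $p$ has $k$-th component $\big(\partial H/\partial y_k(\bar y)\big)\big(\alpha_n p_n+\alpha_l p_l\big)$. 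Writing this out for $k\in\{n,l\}$ yields the two coupled backward parabolic equations; the terminal condition $p(\cdot,T)=\mu(\bar y(\cdot,T)-y_\Omega)$ splits into $p_n(\cdot,T)=\mu(\bar y_n(\cdot,T)-y_{n,\Omega})$ and $p_l(\cdot,T)=\mu(\bar y_l(\cdot,T)-y_{l,\Omega})$; and the homogeneous Neumann conditions carry over unchanged.

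There is no deep obstacle here; the only step meriting care is confirming that the concrete $H$ of~\eqref{RD_nodal} lies within the scope of Proposition~\ref{prop:bounded_derivatives}, since its parametrization---with the outer exponent $n_n$ on the inhibition bracket---is written slightly differently from the template $A/(A+B)$ of Section~\ref{sec3}. I would either exhibit the explicit identification with that template (cooperativity $m=n_n$, Hill exponents $\ge 1$) or bound $|\partial H/\partial y_n|$ and $|\partial H/\partial y_l|$ directly: near $y_n=0$ the numerator derivative is $O(y_n^{n_n-1})\to 0$ because $n_n>1$; for large $y_n$ the ratio saturates to $1$, so both derivatives vanish; and continuity on the compact intermediate region completes the uniform bound. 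This bound is exactly what underlies the invertibility of $F_y$ in Proposition~\ref{differentiability} and the integration-by-parts identity in Proposition~\ref{prop:reduced_cost_derivative}, after which the corollary follows at once.
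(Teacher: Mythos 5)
Your proposal is correct and follows exactly the route the paper intends: the corollary is stated as a direct specialization of Theorem~\ref{thm:first_order_optimality} (the paper omits the proof), and you carry out precisely that specialization, with the identification $f(u)=(\beta_n u_n,\beta_l u_l)^\top$, the correct transpose expansion of $\alpha(x)\nabla H(\bar y)^\top p$ into the two coupled adjoint equations, and a sound verification that the concrete Hill-type $H$ falls under Proposition~\ref{prop:bounded_derivatives}. No gaps; the hypothesis-checking you add is a useful supplement to what the paper leaves implicit.
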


\subsubsection{Numerical Simulation}

We implemented our optimal control framework using the Crank–Nicolson discretization, and solved the resulting optimization problem using the nonlinear conjugate gradient method with Polak-Ribière formula \cite{polak1969note}. The initial condition consists of a random spatial distribution with low and high concentration domains. We set our initial configuration to generate striped patterns ($\alpha_n=0.8$, $\alpha_l=4.0$) and investigate transitions to diverse targets. The simulations used a spatial domain $\Omega = [0, 800] \times [0, 800]$ $\mu\text{m}$ with grid resolution $\Delta x = \Delta y = 10$ $\mu\text{m}$ and time step $\Delta t = 0.5$ hours.

The control parameters remained fixed: $\beta_n = 0.8$, $\beta_l = 4.0$, regularization parameter $\lambda = 10^{-12}$, and tracking coefficient $\mu = 1.0$. Convergence occurred within 20-40 iterations for all cases.

\begin{figure*}
\centering
\begin{subfigure}[b]{0.48\textwidth}
    \centering
    \includegraphics[width=\textwidth]{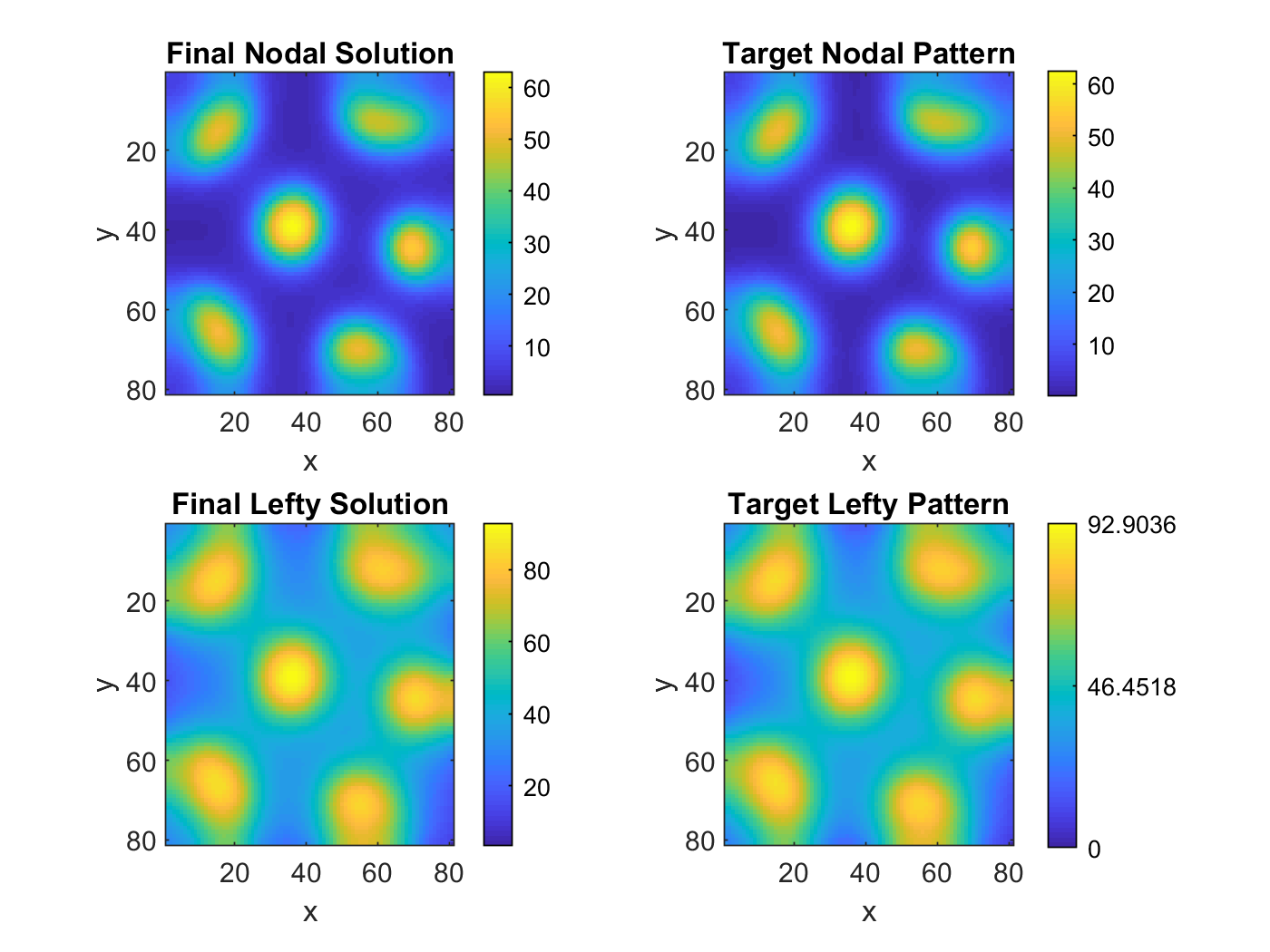}
    \caption{Case 1: Spotted pattern}
\end{subfigure}
\hfill
\begin{subfigure}[b]{0.48\textwidth}
    \centering
    \includegraphics[width=\textwidth]{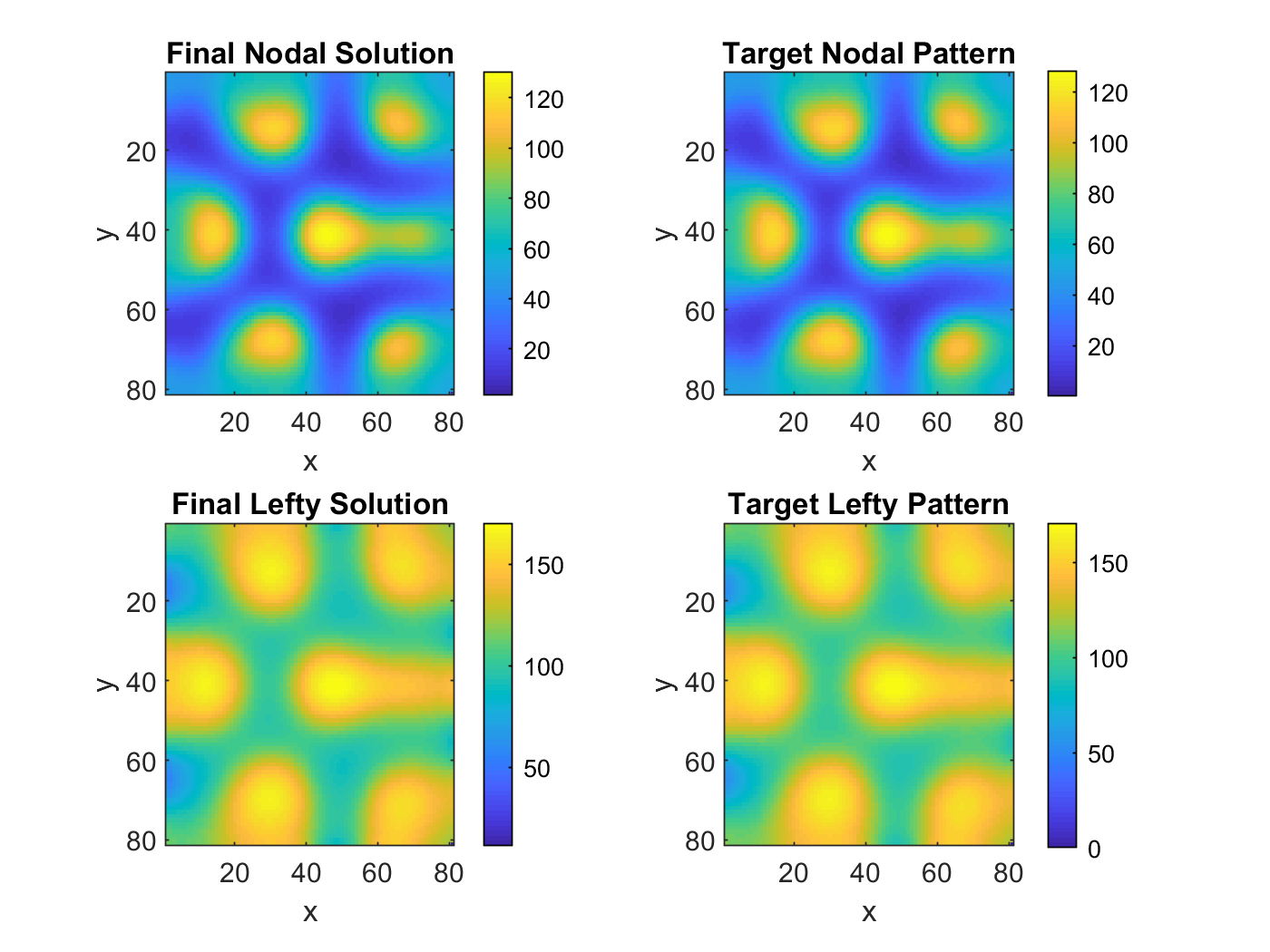}
    \caption{Case 2: Modified spotted}
\end{subfigure}
\vspace{0.3cm}
\begin{subfigure}[b]{0.48\textwidth}
    \centering
    \includegraphics[width=\textwidth]{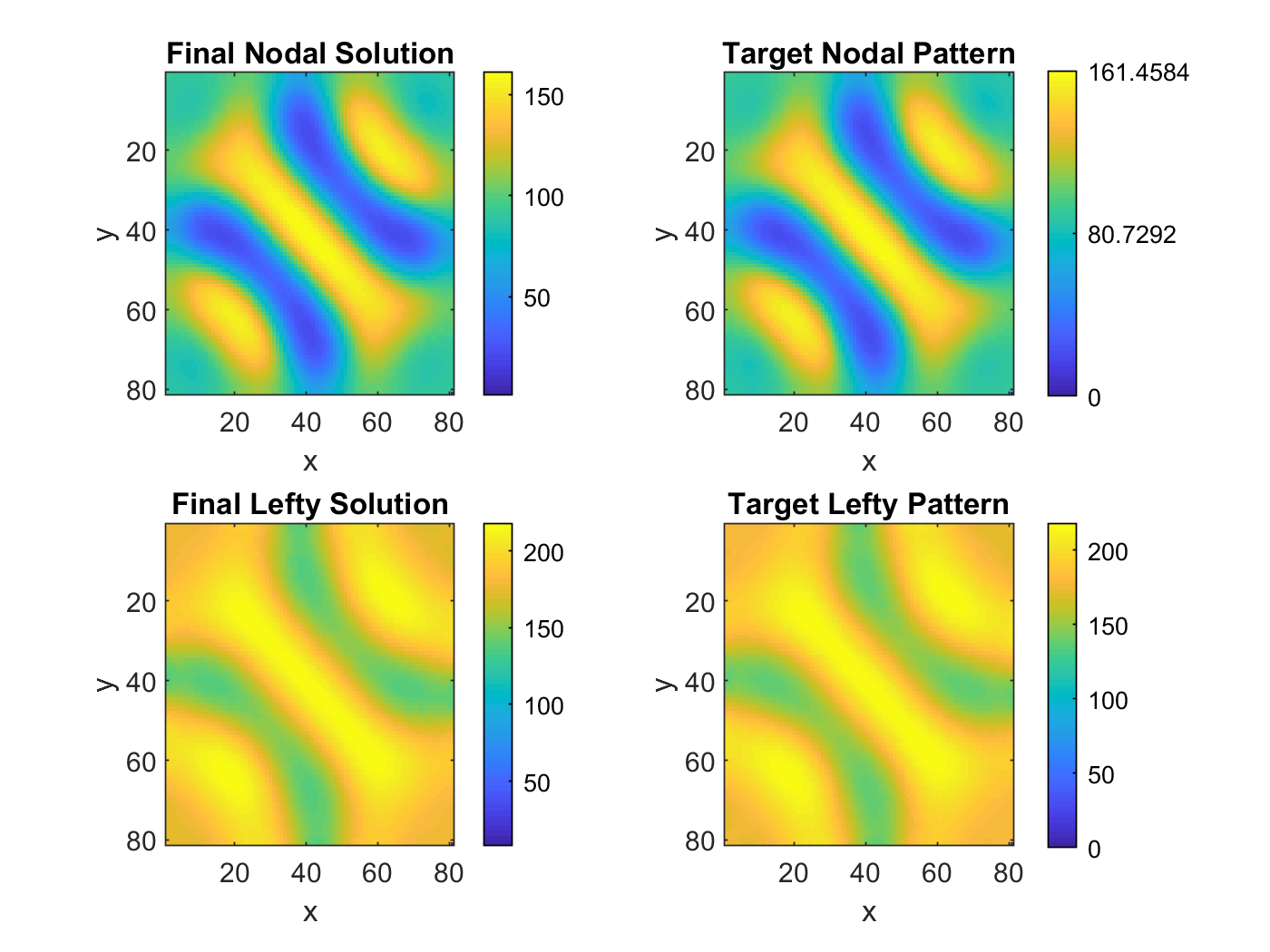}
    \caption{Case 3: Modified striped}
\end{subfigure}
\hfill
\begin{subfigure}[b]{0.48\textwidth}
    \centering
    \includegraphics[width=\textwidth]{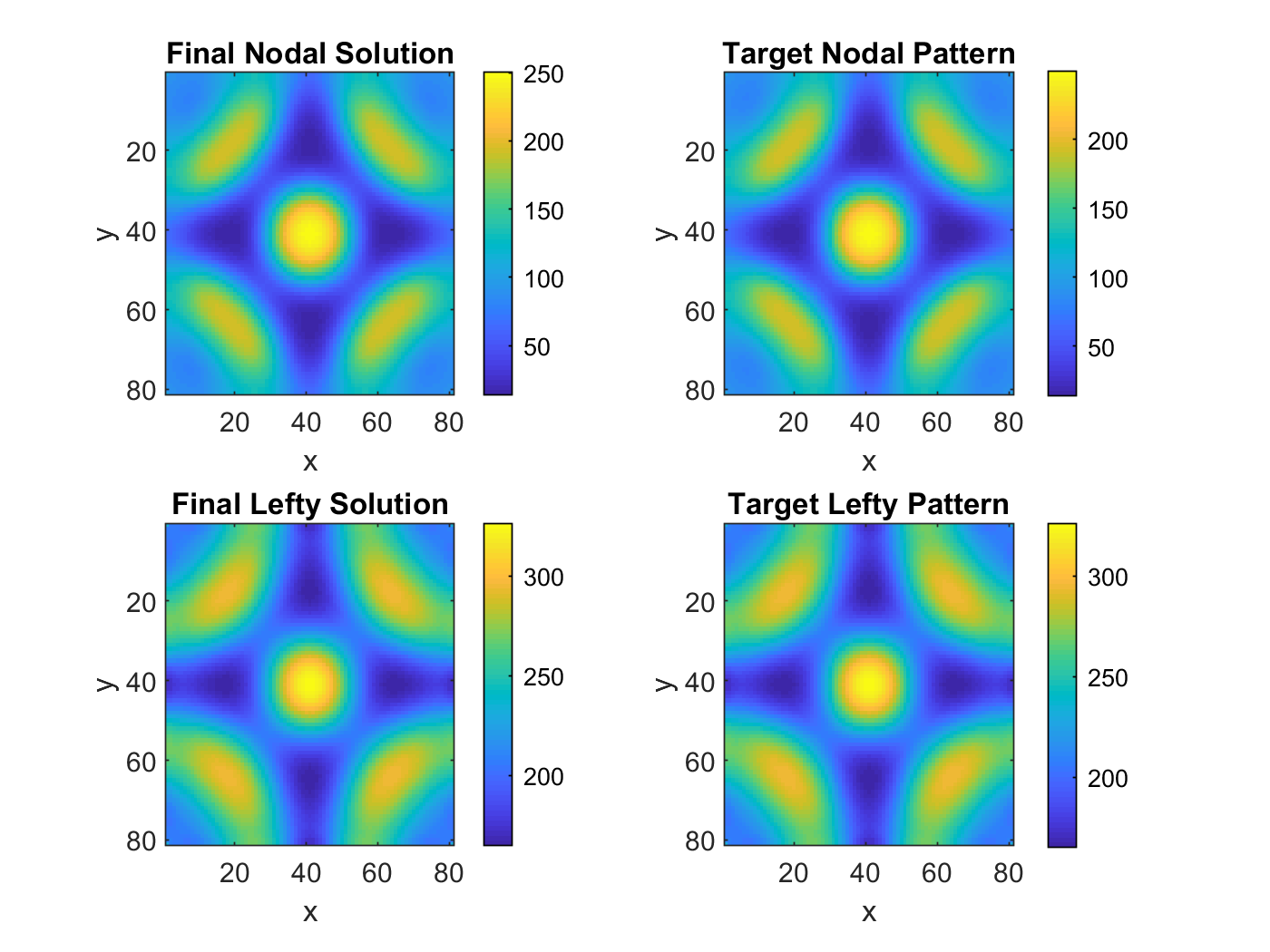}
    \caption{Case 4: Striped/Radial}
\end{subfigure}
\caption{Final solution (left) and target pattern (right) comparisons for Nodal (top) and Lefty (bottom) concentrations in each case. Blue: low, yellow: high concentration (nM). }
\label{fig:pattern_comparisons}
\end{figure*}
\begin{figure*}
\centering
\begin{subfigure}[b]{0.48\textwidth}
    \centering
    \includegraphics[width=\textwidth]{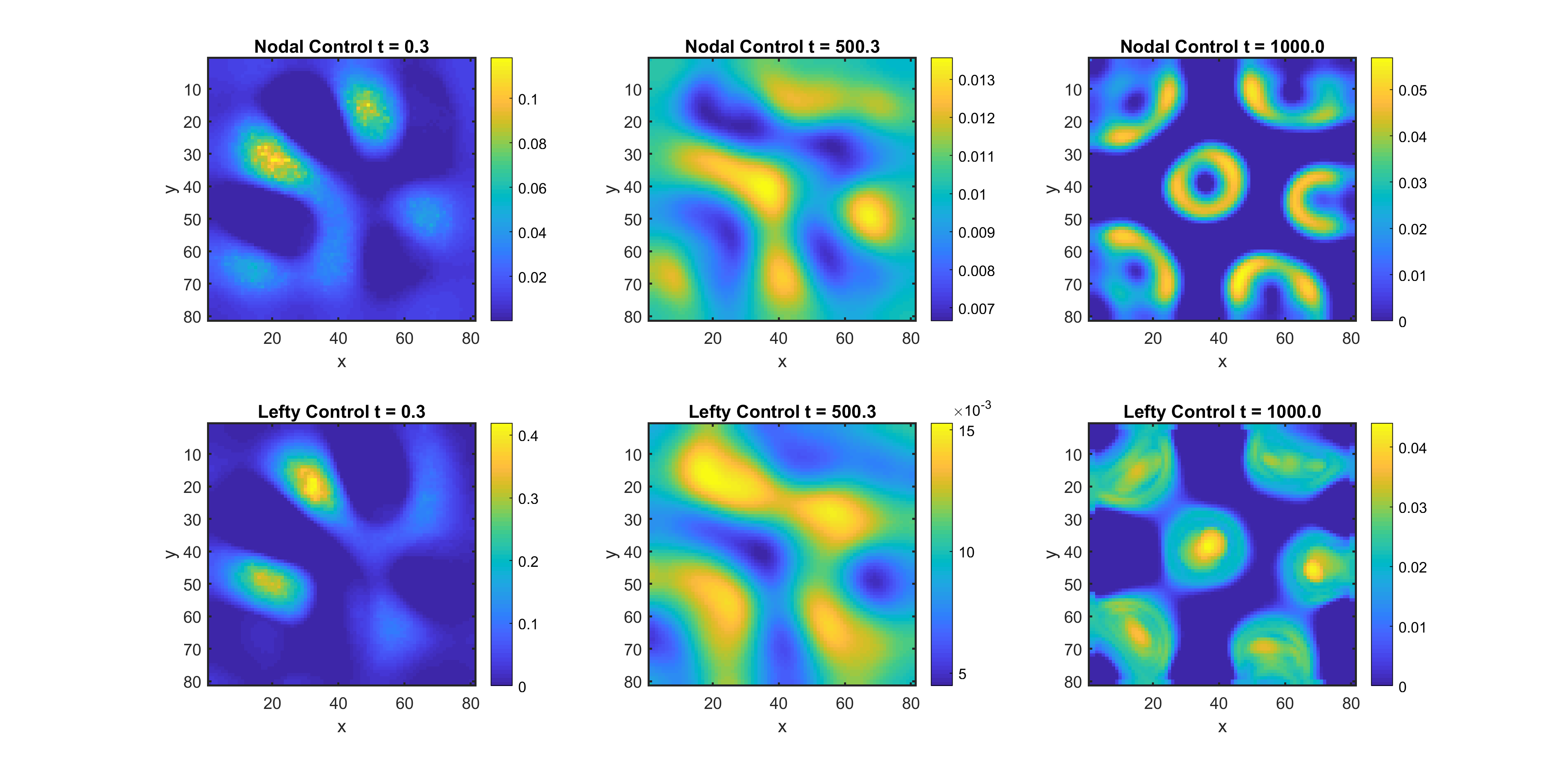}
    \caption{Case 1: Spotted formation}
\end{subfigure}
\hfill
\begin{subfigure}[b]{0.48\textwidth}
    \centering
    \includegraphics[width=\textwidth]{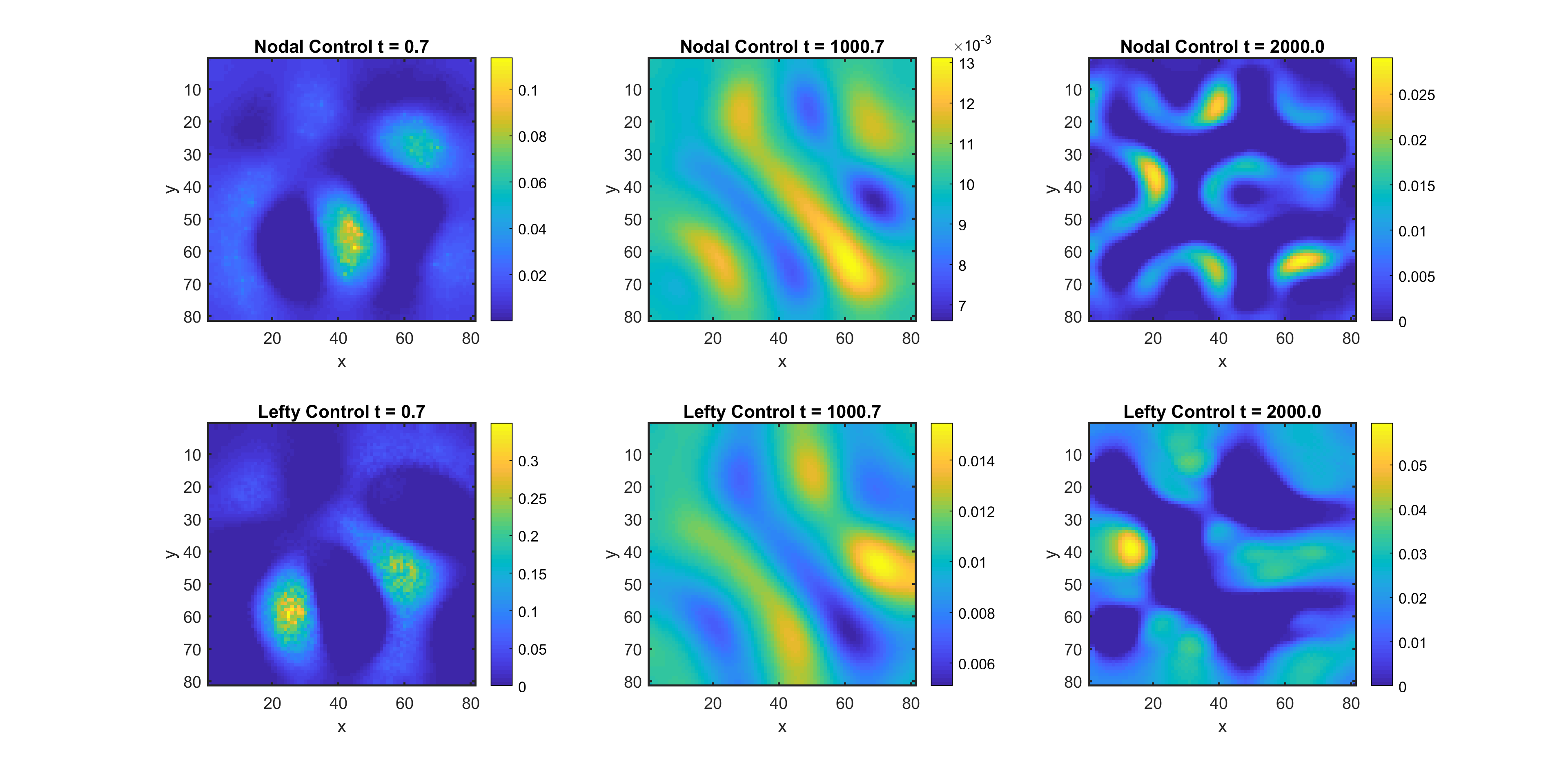}
    \caption{Case 2: Modified spotted}
\end{subfigure}
\vspace{0.3cm}
\begin{subfigure}[b]{0.48\textwidth}
    \centering
    \includegraphics[width=\textwidth]{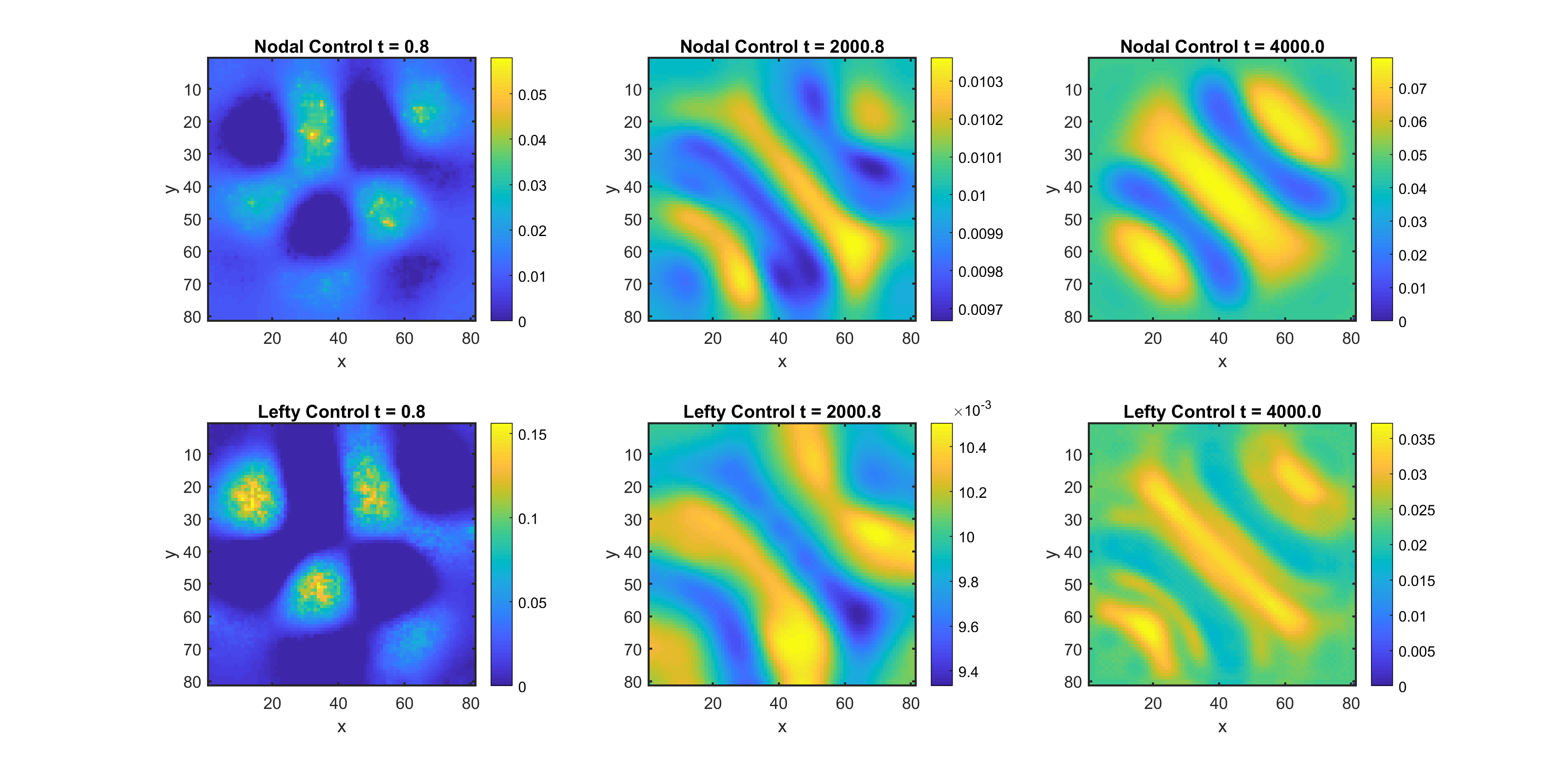}
    \caption{Case 3: Stripe redirection}
\end{subfigure}
\hfill
\begin{subfigure}[b]{0.5\textwidth}
    \centering
    \includegraphics[width=\textwidth]{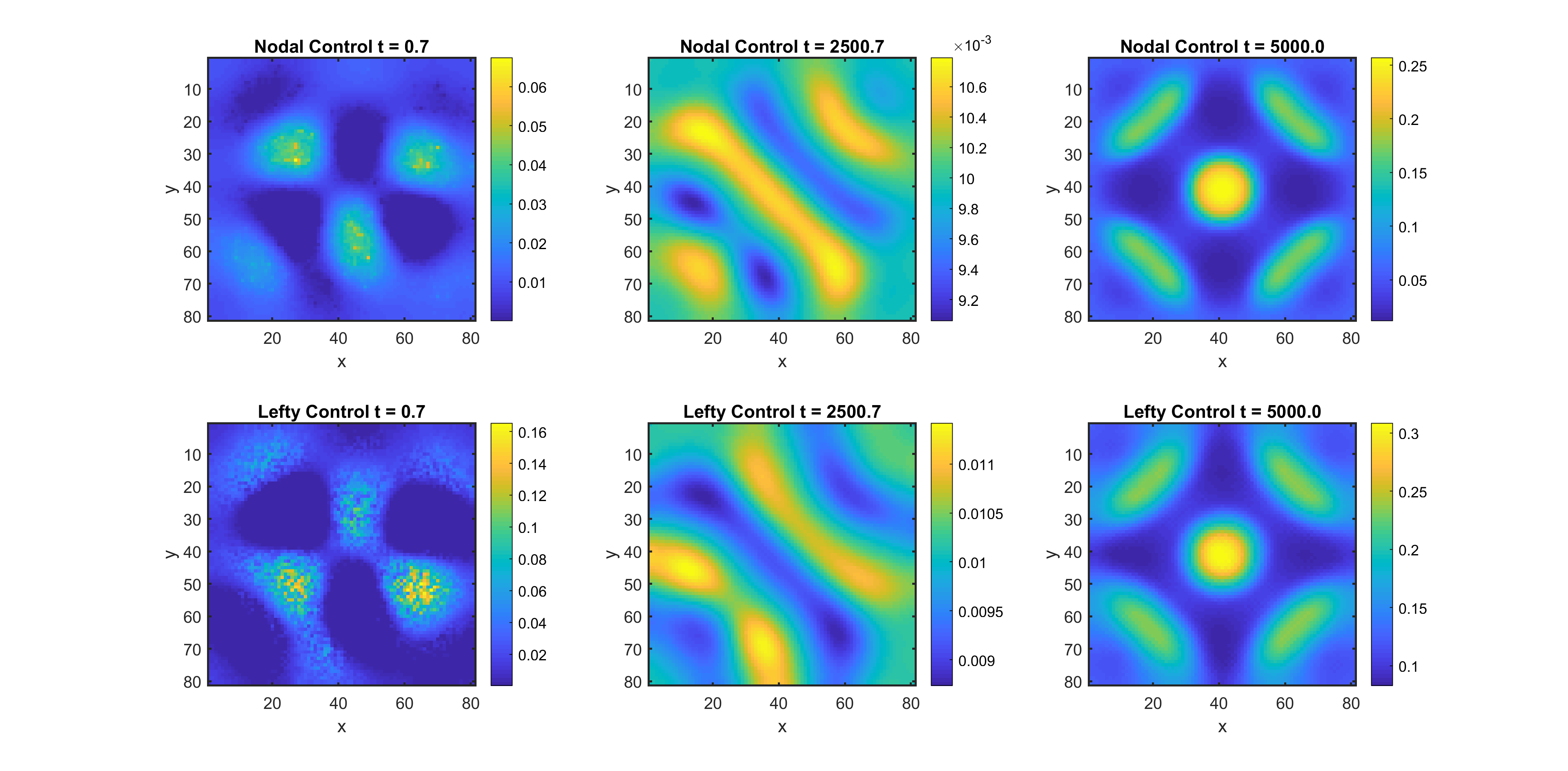}
    \caption{Case 4: Radial organization}
\end{subfigure}
\caption{Optimal control signals for Nodal (top) and Lefty (bottom) at representative time points. Time values shown are in units of 2 hours. Blue: low, yellow: high control intensity.}
\label{fig:control_signals}
\end{figure*}
\begin{table}[H]
    \centering
    \caption{Target configurations and achieved relative errors}
    \label{tab:combined_results}
    \scriptsize
    \begin{tabular}{ccccccc}
        \hline
        \textbf{Target} & $\boldsymbol{\alpha_n}$ & $\boldsymbol{\alpha_l}$ & \textbf{Pattern} & \textbf{Time (h)} & \textbf{Nodal Err.} & \textbf{Lefty Err.} \\
        \hline
        Initial & 0.8 & 4.0 & Striped & - & - & - \\
        Case 1 & 0.5 & 4 & Spotted & 2000 & 2.36e-2 & 1.90e-2 \\
        Case 2 & 0.8 & 4.5 & Spotted& 4000 & 1.18e-2 & 4.42e-3 \\
        Case 3 & 1 & 4.6 & Striped& 8000 & 1.42e-3 & 6.06e-4 \\
        Case 4 & 1.5 & 8.0 & Mixed& 10000 & 2.25e-3 & 1.00e-3 \\
        \hline
    \end{tabular}
\end{table}

The achieved control precision is quantified through relative errors $||y_{optimal}-y_{\Omega}||_2/||y_{\Omega}||_2$, with all cases showing errors below 2.5\%(Table \ref{tab:combined_results}). Figure \ref{fig:pattern_comparisons} displays the comparison between achieved solutions and target patterns for both species in each case.

The spatiotemporal control patterns (Fig. \ref{fig:control_signals}) reveal distinct regulatory strategies for each morphological transition. For Case 1, the controls selectively modulate production to fragment the initial stripe structure. Case 2 employs similar spatial distribution but with lower Lefty control amplitude. Case 3 systematically redirects patterns into aligned stripes. Case 4 combines directional stripe formation with radial organization. Temporally, each control progresses through destabilization and refinement phases.

\section{Conclusion}
\label{sec7}

This paper developed optimal control techniques for coupled reaction-diffusion systems that arise in synthetic biology. We proved that our control approach is mathematically sound and demonstrated its effectiveness on the Nodal-Lefty reaction-diffusion system, successfully transforming stripe patterns into spots and other shapes. Our spatiotemporal control framework advances the active manipulation of Turing patterns, enabling transitions between diverse patterns through computed external inputs. While the current optimization requires significant computation for high-resolution grids, future work will focus on developing efficient algorithms that maintain control accuracy at reduced computational cost. Additionally, we will explore nonlinear control strategies for improved efficiency and incorporate stochastic perturbations to better capture the inherent variability of biological systems.

\end{document}